\newtheorem{thm}{Theorem}[section]
\newtheorem{cor}[thm]{Corollary}
\newtheorem{lem}[thm]{Lemma}
 \theoremstyle{definition}
\newtheorem{defin}[thm]{Definition}
\theoremstyle{remark}
\newtheorem{remark}[thm]{Remark}
\newtheorem{example}[thm]{Example}
\numberwithin{equation}{section}
\newcommand{\ben}{\begin{enumerate}}
\newcommand{\een}{\end{enumerate}}
\newcommand{\bit}{\begin{itemize}}
\newcommand{\eit}{\end{itemize}}
\begin{document}

\title
{A Support Characterization for Functions on the Unit Sphere with Vanishing Integrals Arising from Tangent Planes to a Given Surface}

\vskip 1cm
\author{Yehonatan Salman \\ Email: salman.yehonatan@gmail.com\\ Tel-Aviv University}
\date{}

\maketitle

\begin{abstract}

Let $\Sigma$ be an axially symmetric, smooth, closed hypersurface in $\Bbb R^{n + 1}$ with a simply connected interior which is contained inside the unit sphere $\Bbb S^{n}$. For a continuous function $f$, which is defined on $\Bbb S^{n}$, the main goal of this paper is to characterize the support of $f$ in case where its integrals vanish on subspheres obtained by intersecting $\Bbb S^{n}$ with the tangent hyperplanes of a certain subdomain $\mathcal{U}\subset\Sigma$ of $\Sigma$. We show that the support of $f$ can be characterized in case where its integrals also vanish on subspheres obtained by intersecting $\Bbb S^{n}$ with hyperplanes obtained by infinitesimal perturbations of the tangent hyperplanes of $\mathcal{U}$ and where $\mathcal{U}$ satisfies some regularity condition which implies local convexity.

\end{abstract}

\section{Introduction and Motivation}

\hskip0.65cm The main mathematical tool to be investigated in this paper is the spherical transform which integrates functions, defined on the unit hypersphere $\Bbb S^{n}$ in $\Bbb R^{n + 1}$, on a prescribed $n$ dimensional family $\Omega$ of subspheres in $\Bbb S^{n}$. By definition, a subsphere in $\Bbb S^{n}$ is any nonempty subset obtained by intersecting $\Bbb S^{n}$ with a hyperplane in $\Bbb R^{n + 1}$ and thus the family $\Omega$ is completely determined by the corresponding set $\Omega'$ of hyperplanes.

Results related to the spherical transform, such as reconstruction methods, uniqueness theorems and range characterization have been derived by many authors and we do not pretend to cover here a large list of references. For a small sample of some classical and new results see \cite{2, 4, 5, 6, 7, 8, 9, 10, 11, 13, 15, 17, 18, 21, 23, 24}.

The earliest results in this subject matter were obtained by Paul Funk in \cite{4, 5} which obtained a reconstruction method for the case where $\Omega'$ consists of all hyperplanes in $\Bbb R^{3}$ which pass through the origin or equivalently where $\Omega$ consists of all great circles on $\Bbb S^{2}$, i.e., the Funk transform. In this case it was shown how the even part of a function can be reconstructed while the kernel of the Funk transform consists of odd functions. In \cite{10, 11, 24} the results above were generalized in any dimension. Similar results can be obtained for the generalized case where $\Omega'$ consists of tangent planes to a hypersphere inside $\Bbb S^{n}$ with center at the origin and radius $r, 0 \leq r < 1$, or equivalently where $\Omega$ consists of subspheres with a fixed radius, by using expansion into spherical harmonics, as was shown in \cite{5}.

Results for the similar case where $\Omega'$ consists of hyperplanes which pass through a common point on $\Bbb S^{n}$, or equivalently where $\Omega$ consists of subspheres which have a common point, were obtained in \cite{10, 11}. There, the author uses the stereographic projection in order to obtain an explicit relation between the spherical transform, which in this case is called the spherical slice transform, and the classical Radon transform in order to obtain a reconstruction method and a uniqueness theorem.

More generally, reconstruction formulae and kernel and range characterization for the case where $\Omega'$ consists of hyperplanes which pass through a fixed point inside $\Bbb S^{n}$ were obtained recently in \cite{15, 18, 19, 20, 21}.

Results for the case where $\Omega'$ consists of hyperplanes which are parallel to the last coordinate axis $x_{n + 1}$, or equivalently where $\Omega$ consists of subspheres orthogonal to the hyperplane $\mathcal{H}:x_{n + 1} = 0$, were obtained in \cite{8, 9, 23}. There it was shown how the even part of a function, with respect to the reflection through $\mathcal{H}$, can be reconstructed while the kernel consists of odd functions with respect to this reflection.\\

Observe that all the cases discussed above can be obtained as a particular case by considering sets $\Omega'$ which consist of tangent planes to a given hypersurface in $\Bbb R^{n + 1}$. Indeed, the Funk transform can be obtained by considering the set $\Omega'$ of tangent planes to a hypersphere inside $\Bbb S^{n}$ with center at the origin and radius $r$ and then taking the limit $r\rightarrow0^{+}$. Similarly, the spherical slice transform and the generalized case where $\Omega'$ consists of hyperplanes which pass through a common point $p$ inside $\Bbb S^{n}$ can be obtained exactly as before by considering hyperspheres with an arbitrary small radius and center at $p$. The case where $\Omega'$ consists of hyperplanes which are parallel to the last coordinate axis $x_{n + 1}$ can be obtained by considering a hypersphere with a center on $te_{n + 1}, t\in\Bbb R$ and radius $r > 0$ and then taking the limit $t\rightarrow\pm\infty, r\rightarrow0^{+}$.

This more generalized case where $\Omega'$ consists of tangent planes to a hypersurface $\Sigma$ was considered in \cite{14, 15, 16, 22}. The case where $\Sigma$ is a spheroid inside $\Bbb S^{n}$ with center at the origin was considered in \cite{22} and the case where $\Sigma$ is an arbitrary sphere inside $\Bbb S^{n}$ was considered in \cite{15}. The more generalized case where $\Sigma$ is an arbitrary ellipsoid inside $\Bbb S^{n}$ was considered in \cite{14, 16} where it is shown how uniqueness and reconstruction theorems can be obtained if one assumes some nontrivial assumptions on the support of each function in question.\\

In all of the results mentioned above one cannot obtain a support characterization for functions with vanishing integrals on each subsphere in the family $\Omega$. That is, one cannot guarantee that a function $f$, with vanishing integrals on each subsphere in $\Omega$, will vanish even on an arbitrary small open subset of $\Bbb S^{n}$. Indeed, any odd function is in the kernel of the Funk transform and of course one can easily choose such functions which do not vanish in any small open subset. For the more general case, where $\Omega'$ consists of tangent planes to a hypersphere inside $\Bbb S^{n}$ with center at the origin and radius $r, 0\leq r < 1$, then if $r$ is a root of the Gegenbauer polynomial of order $(n - 2) / 2$ and degree $m$ then the integrals of any spherical harmonic of degree $m$ will vanish on each subsphere in $\Omega$. Similar examples can be obtained for the case where $\Omega'$ consists of hyperplanes passing through a common point inside $\Bbb S^{n}$ (see \cite{18} where the kernel of the corresponding integral transform is characterized). For the case where $\Omega$ consists of subspheres which are orthogonal to the hyperplane $\mathcal{H}:x_{n + 1} = 0$ then the integrals of any function which is odd, with respect to the reflection through $\mathcal{H}$, will vanish on each subsphere in $\Omega$. Similarly, for the case where $\Omega'$ consists of tangent planes to an ellipsoid inside $\Bbb S^{n}$ one needs to assume some prescribed conditions on the support of each function $f$, with vanishing integrals on each subsphere of $\Omega$, in order to guarantee that $f$ will vanish on a larger subset of $\Bbb S^{n}$ (see \cite{16}).

In conclusion, one cannot expect to obtain results with regard to the support characterization by assuming that $\Omega'$ consists only of tangent planes to a given hypersurface $\Sigma$.

However, the main result of this paper asserts that if $\Sigma$ is a hypersurface, which satisfies some prescribed conditions, and $\Omega'$ consists of all tangent planes to a certain subdomain $\mathcal{U}\subset\Sigma$ of $\Sigma$, then a support characterization, for functions with vanishing integrals on each subsphere in $\Omega$, can be obtained if one allows a "slight enlargement" of $\Omega$. By this slight enlargement we mean that for the set $\Omega'$ we add all the hyperplanes which are obtained from the tangent planes to $\mathcal{U}$ by "infinitesimal perturbations". The notion of infinitesimal perturbations of a hyperplane will be defined more rigorously in the next section. Intuitively, the enlargement of $\Omega$ means that if one thinks of $\Omega$ as a manifold in the set of all subspheres in $\Bbb S^{n}$ and if the spherical transform vanishes on $\Omega$ then its first order derivatives also vanish on $\Omega$. Again, the notion of a derivative of the spherical transform will be defined more rigorously in the next section.

As for the hypersurface $\Sigma$, we assume that it closed, smooth, contained inside the unit sphere $\Bbb S^{n}$ and is axially symmetric with respect to the last coordinate axis $x_{n + 1}$. We also assume that the interior of $\Sigma$ is simply connected so for example the case where $\Sigma$ is a torus is omitted. The last condition is imposed to ensure that the last coordinate axis $x_{n + 1}$ all ways intersects $\Sigma$ which will be important to ensure the existence of the subdomain $\mathcal{U}\subset\Sigma$ as described above. For the domain $\mathcal{U}$ we assume that it satisfies an additional regularity condition which will be rigourously defined below. This regularity condition implies in particular that $\mathcal{U}$ is locally convex at each point where we say that $\mathcal{U}$ is locally convex at a point $p$ if there exists a ball $B$, with the center at $p$, such that $B\cap\mathcal{U}$ is contained on a boundary of a strictly convex domain $\mathcal{D}$, i.e., $\mathcal{D}$ is convex and its boundary does not contain any line segments.

In terms of the support characterization, assume that for the subdomain $\mathcal{U}$ of $\Sigma$, which is completely determined by $\Sigma$, the family $\Omega'$ consists of all the tangent planes to $\mathcal{U}$ together with their infinitesimal perturbations. Then, any continuous function $f$, with vanishing integrals on each subsphere in $\Omega$, has support inside the projection of $\mathcal{U}$ on the hypersphere $\Bbb S^{n}$ with respect to the north pole $e_{n + 1}$. This notion of projection will be defined more rigorously in the next section.

For the proof of the main result we exploit the relation between the spherical and spherical mean transforms which is induced by the stereographic projection. With the help of the stereographic projection we define a function which maps the subdomain $\mathcal{U}\subset\Sigma$ onto a hypersurface $\mathcal{U}'$ in $\Bbb R^{n}\times[0,\infty)$. We then show that the support characterization problem can be converted to the problem of the support characterization for functions whose spherical mean transform vanishes, together with its first order derivatives, on $\mathcal{U}'$. Our main observation is that $\mathcal{U}'$ is allways a space like surface (which will be defined in the next section) and thus we can exploit uniqueness theorems, which were obtained in \cite[Chap VI]{3}, for this type of surfaces. After characterizing the support of functions, whose spherical mean transform satisfies the assumption above, we use the inverse stereographic projection in order to obtain a support characterization for the case of the spherical transform with its corresponding family $\Omega$ of subspheres.\\

The main results of this paper are formulated in Sect. 3. Before formulating the main results we introduce some mathematical background given in Sect. 2.

\section{Mathematical Background}

\hskip0.6cm Denote by $\Bbb R^{n}$ the $n$ dimensional Euclidean space and by $\Bbb R^{+}$ the ray $[0, \infty)$. For a point $x\in\Bbb R^{n}$ and a real number $r\geq0$ denote by $\Bbb S^{n - 1}(x, r)$ the $n - 1$ dimensional sphere in $\Bbb R^{n}$ with the center at $x$ and radius $r$, i.e.,
$$\hskip-6.9cm\Bbb S^{n - 1}(x, r) = \{x'\in\Bbb R^{n}:|x' - x| = r\}.$$
In particular, we denote by $\Bbb S^{n - 1} := \Bbb S^{n - 1}(\bar{0}, 1)$ the unit sphere in $\Bbb R^{n}$. Denote by $\Bbb B_{n}(x, r)$ the open $n$ dimensional ball in $\Bbb R^{n}$ with the center at $x$ and radius $r$, i.e.,
\vskip-0.2cm
$$\hskip-7.2cm\Bbb B_{n}(x, r) = \{x'\in\Bbb R^{n}:|x' - x| < r\}.$$
In case where the dimension of $\Bbb B_{n}(x, r)$ is clear from the context we will just write $\Bbb B(x, r)$. For a point $\psi\in\Bbb S^{n}$ and a real number $\rho\geq0$ denote by $\mathrm{H}_{\psi, \rho}$ the hyperplane in $\Bbb R^{n + 1}$ with the distance $\rho$ from the origin in the direction $\psi$, i.e.,
$$\hskip-8cm\mathrm{H}_{\psi, \rho} = \{x\in\Bbb R^{n + 1}:x\cdot\psi = \rho\}.$$
In case where $\rho < 1$ we denote by $\Bbb S^{n - 1}_{\psi ,\rho}$ the $n - 1$ dimensional subsphere in $\Bbb S^{n}$ which is obtained by the intersection of $\mathrm{H}_{\psi, \rho}$ with $\Bbb S^{n}$, i.e.,
$$\hskip-6.15cm\Bbb S^{n - 1}_{\psi ,\rho} = \Bbb S^{n}\cap\mathrm{H}_{\psi, \rho} = \{x\in\Bbb S^{n}:x\cdot\psi = \rho\}.$$

For a continuous function $f$, which is defined on $\Bbb S^{n}$, define its spherical transform $\mathcal{S}f$ by
\vskip-0.2cm
$$\hskip-9.6cm(\mathcal{S}f):\Bbb S^{n}\times[0, 1)\rightarrow\Bbb R,$$
$$\hskip-8.4cm(\mathcal{S}f)(\psi, t) = \int_{\psi'\cdot\psi = t}f(\psi')dS_{\psi'}$$
where $dS_{\psi'}$ is the standard infinitesimal volume measure. For each point $(\psi, t)\in\Bbb S^{n}\times[0,1)$ the spherical transform $\mathcal{S}f$ evaluates the integral of $f$ on the $n - 1$ dimensional subsphere $\Bbb S_{\psi, t}^{n - 1}$. Sometimes, it will be more convenient to work with the unit ball $\Bbb B_{n + 1}(\overline{0},1)$ as the domain of definition rather than with the cylinder $\Bbb S^{n}\times[0,1)$ and thus we define the modified spherical transform $\mathcal{S}_{0}f$, of a function $f$, by
\vskip-0.2cm
$$\hskip-3.8cm\mathcal{S}_{0}f:\Bbb B_{n + 1}(\overline{0},1)\setminus\{\overline{0}\}\rightarrow\Bbb R, (\mathcal{S}_{0}f)(x) = (\mathcal{S}f)(x / |x|, |x|).$$
That is, for a point $x\neq\overline{0}$ inside the unit ball we consider the unique hyperplane for which $x$ is its closest point to the origin and integrate $f$ on the $n - 1$ dimensional subsphere obtained from the intersection of this hyperplane with $\Bbb S^{n}$. Observe that the modified spherical transform is not defined at $x = \overline{0}$ since this point does not determine a hyperplane in a unique way.

Similarly to the spherical transform, the spherical mean transform $\mathcal{R}f$ of a continuous function $f$, which is defined on $\Bbb R^{n}$, is defined by
$$\hskip-9.6cm(\mathcal{R}f):\Bbb R^{n}\times\Bbb R^{+}\rightarrow\Bbb R,$$
$$\hskip-8.1cm(\mathcal{R}f)(x, t) = \int_{|x' - x| = t}f(x')dS_{x'}$$
where $dS_{x'}$ is the standard infinitesimal volume measure on the sphere of integration. For a point $(x, t)\in\Bbb R^{n}\times\Bbb R^{+}$ the spherical mean transform $\mathcal{R}f$ evaluates the integral of $f$ on the hypersphere $\Bbb S^{n - 1}(x, t)$ with center at $x$ and radius $t$.

Denote by $\Lambda$ and $\Lambda^{-1}$ respectively the stereographic and inverse stereographic projections between $\Bbb S^{n}$ and $\Bbb R^{n}$:
\vskip-0.2cm
$$\hskip-9.3cm\Lambda:\Bbb S^{n}\setminus\{e_{n + 1}\}\rightarrow\Bbb R^{n},$$
$$\hskip-7.1cm\Lambda(x) = \left(\frac{x_{1}}{1 - x_{n + 1}},...,\frac{x_{n}}{1 - x_{n + 1}}\right),$$

$$\hskip-8.9cm\Lambda^{- 1}:\Bbb R^{n}\rightarrow\Bbb S^{n}\setminus\{e_{n + 1}\},$$
$$\hskip-5.15cm\Lambda^{- 1}(x) = \left(\frac{2x_{1}}{1 + |x|^{2}},...,\frac{2x_{n}}{1 + |x|^{2}},\frac{- 1 + |x|^{2}}{1 + |x|^{2}}\right).$$
\vskip0.3cm

The following important lemma will be used throughout the text. Its proof is straightforward but rather technical and hence it will be omitted. For more details see \cite[Chap. 7]{17}.

\begin{lem}

Consider the subsphere $\Bbb S^{n - 1}_{\psi, \rho}$ of $\Bbb S^{n}$ and assume that it does not pass through the north pole $e_{n + 1}$, i.e, the condition $\rho\neq\psi_{n + 1}$ is satisfied. Then, the image $\Bbb S_{\psi, \rho}^{n - 1, \ast} = \Lambda(\Bbb S^{n - 1}_{\psi, \rho})$ of $\Bbb S^{n - 1}_{\psi, \rho}$ under the stereographic projection $\Lambda$ is the $n - 1$ dimensional sphere
\begin{equation}\hskip-3.75cm\Bbb S_{\psi, \rho}^{n - 1, \ast} = \Bbb S^{n - 1}\left(\psi^{\ast} / (\rho - \psi_{n + 1}), \sqrt{1 - \rho^{2}} / |\rho - \psi_{n + 1}|\right)\end{equation}
in $\Bbb R^{n}$ with the center at $\psi^{\ast} / (\rho - \psi_{n + 1})$ and radius $\sqrt{1 - \rho^{2}} / |\rho - \psi_{n + 1}|$ where $\psi^{\ast} = (\psi_{1},...,\psi_{n})$ (i.e., $\psi^{\ast}$ is the orthogonal projection of $\psi$ to $\Bbb R^{n}$). Futhermore, let $dS$ and $dS^{\ast}$ be respectively the standard infinitesimal volume measures on $\Bbb S_{\psi, \rho}^{n - 1}$ and $\Bbb S_{\psi, \rho}^{n - 1, \ast}$. Then, for every point $x$ on $\Bbb S_{\psi ,\rho}^{n - 1, \ast}$ we have the following relation:
\vskip-0.2cm
$$\hskip-6cm dS(\Lambda^{-1}(x)) = 2^{n - 1}dS^{\ast}(x)/(1 + |x|^{2})^{n - 1}.$$
\end{lem}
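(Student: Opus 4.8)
\emph{The plan is to verify both assertions by pulling the defining condition of $\Bbb S^{n-1}_{\psi,\rho}$ back to $\Bbb R^{n}$ through the explicit formula for $\Lambda^{-1}$, and then to read off the measure relation from the conformality of the stereographic projection.}

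First I would identify the image sphere. A point $x\in\Bbb R^{n}$ lies in $\Bbb S^{n-1,\ast}_{\psi,\rho}=\Lambda(\Bbb S^{n-1}_{\psi,\rho})$ exactly when $\Lambda^{-1}(x)$ satisfies the defining relation $\Lambda^{-1}(x)\cdot\psi=\rho$. Substituting the explicit expression for $\Lambda^{-1}$ and clearing the common denominator $1+|x|^{2}$ gives
$$2\,x\cdot\psi^{\ast}+(|x|^{2}-1)\psi_{n+1}=\rho(1+|x|^{2}).$$
Collecting the $|x|^{2}$ terms and dividing by the factor $\rho-\psi_{n+1}$, which is nonzero precisely by the hypothesis $\rho\neq\psi_{n+1}$, turns this into the monic quadratic
$$|x|^{2}-\frac{2\,x\cdot\psi^{\ast}}{\rho-\psi_{n+1}}+\frac{\rho+\psi_{n+1}}{\rho-\psi_{n+1}}=0.$$
Completing the square rewrites the first two terms as $\bigl|x-\psi^{\ast}/(\rho-\psi_{n+1})\bigr|^{2}$, so $x$ already lies on a sphere centered at $\psi^{\ast}/(\rho-\psi_{n+1})$; it remains only to evaluate the resulting constant.

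At this point the only nontrivial inputs are the identity $|\psi^{\ast}|^{2}=1-\psi_{n+1}^{2}$, valid because $\psi\in\Bbb S^{n}$, and the factorization $(\rho-\psi_{n+1})(\rho+\psi_{n+1})=\rho^{2}-\psi_{n+1}^{2}$. Together they collapse the constant to $(1-\rho^{2})/(\rho-\psi_{n+1})^{2}$, so the radius is $\sqrt{1-\rho^{2}}/|\rho-\psi_{n+1}|$, which is exactly the sphere in (2.1). The hypothesis $\rho\neq\psi_{n+1}$ is used both to license the division and to ensure the leading coefficient does not vanish, i.e., that the image is a genuine sphere rather than a hyperplane (the degenerate case corresponding to a subsphere through the north pole).

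For the measure relation I would invoke conformality. A standard computation of the differential of $\Lambda^{-1}$ shows that the pullback of the round metric is $(\Lambda^{-1})^{\ast}g_{\Bbb S^{n}}=\bigl(2/(1+|x|^{2})\bigr)^{2}\,g_{\mathrm{Eucl}}$, so $\Lambda^{-1}$ scales every tangent vector at $x$ by the single factor $2/(1+|x|^{2})$, independently of direction. Restricting this conformal map to the $(n-1)$-dimensional submanifold $\Bbb S^{n-1,\ast}_{\psi,\rho}$, the induced surface measure is multiplied by the $(n-1)$st power of that factor, which yields $dS(\Lambda^{-1}(x))=2^{n-1}\,dS^{\ast}(x)/(1+|x|^{2})^{n-1}$ as claimed.

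The main obstacle is bookkeeping rather than conceptual, and it is concentrated in this last step. For a general diffeomorphism, the factor relating the tangential $(n-1)$-measures would depend on how the tangent space of the submanifold sits inside $\Bbb R^{n}$; it is exactly the \emph{isotropy} of the conformal scaling that makes the factor depend only on the base point $x$. Thus the technical heart is the verification that $(\Lambda^{-1})^{\ast}g_{\Bbb S^{n}}$ is a scalar multiple of the Euclidean metric, after which the $(n-1)$-measure statement is automatic and the remaining algebra is the elementary manipulation sketched above — which explains why the full calculation may be safely omitted.
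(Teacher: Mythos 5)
Your proposal is correct. Note that the paper itself omits the proof of Lemma 2.1, describing it as "straightforward but rather technical" and deferring to \cite{17}, so there is no in-paper argument to compare against; your two-step verification --- pulling the hyperplane condition $\Lambda^{-1}(x)\cdot\psi=\rho$ back through the explicit formula for $\Lambda^{-1}$, completing the square using $|\psi^{\ast}|^{2}=1-\psi_{n+1}^{2}$, and then deriving the measure relation from the conformal factor $2/(1+|x|^{2})$ of the round metric pulled back to $\Bbb R^{n}$ --- is exactly the standard computation being alluded to, and your algebra (in particular the constant collapsing to $(1-\rho^{2})/(\rho-\psi_{n+1})^{2}$) checks out. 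You correctly isolate the one genuinely non-mechanical point, namely that isotropy of the conformal scaling is what makes the $(n-1)$-dimensional volume factor independent of how the tangent space of the image sphere sits in $\Bbb R^{n}$.
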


From the last equation and equation (2.1) in Lemma 2.1 it follows that we have the following relation between the spherical and spherical mean transforms:
\vskip-0.2cm
\begin{equation}\hskip-2.75cm(\mathcal{S}f)(\psi, \rho) = (\mathcal{R}g)\left(\psi^{\ast} / (\rho - \psi_{n + 1}), \sqrt{1 - \rho^{2}} / |\rho - \psi_{n + 1}|\right)\end{equation}
where
\vskip-0.2cm
\begin{equation}\hskip-4.75cm g(x) = 2^{n - 1}f\left(\Lambda^{-1}(x)\right)/(1 + |x|^{2})^{n - 1}, x\in\Bbb R^{n}.\end{equation}

\begin{defin}

A \textbf{hypersurface} $\Sigma$ in $\Bbb R^{n + 1}$ is an $n$ dimensional set which is also the set of solutions to an equation of the form $F(x) = 0$ for a continuous function $F:\Bbb R^{n + 1}\rightarrow\Bbb R$. If $F$ is also continuously differentiable and $\nabla F(x)\neq\bar{0}$ whenever $F(x) = 0$ then $\Sigma$ is called \textbf{smooth}. A one dimensional hypersurface is called a \textbf{curve}.

\end{defin}

\begin{defin}

Let $\Sigma$ be a curve in $\Bbb R^{2}$ and let $\Sigma^{\ast}$ be a subcurve of $\Sigma$ (i.e., a connected subset). Then, $\Sigma^{\ast}$ is called regular if there exists an interval $I$ and a twice continuously differential path function $\gamma:I\rightarrow\Bbb R^{2}$ which parameterizes $\Sigma^{\ast}$ and which satisfies the condition $\gamma_{1}'(t)\gamma_{2}''(t) - \gamma_{1}''(t)\gamma_{2}'(t)\neq0, t\in I$.

\end{defin}

\begin{defin}

For a smooth hypersurface $\Sigma$ denote by $\mathrm{H}_{x}(\Sigma)$ its tangent hyperplane at the point $x\in\Sigma$. Then, the \textbf{singularity set} $\Sigma'\subset\Sigma$ of $\Sigma$ is the set of points $x$ on $\Sigma$ for which the tangent plane $\mathrm{H}_{x}(\Sigma)$ passes through the north pole $e_{n + 1}$ of the unit sphere $\Bbb S^{n}$. We also define the larger subset $\Sigma_{0}\subset\Sigma$ as the set of points on $\Sigma$ whose tangent planes pass through the north pole $e_{n + 1}$ or through the origin.

\end{defin}

\begin{defin}

A closed hypersurface $\Sigma\subset\Bbb R^{n + 1}$ is said to be \textbf{axially symmetric} if it is invariant with respect to rotations which fix the last coordinate axis $x_{n + 1}$. A connected axially symmetric subdomain $\mathcal{U}\subseteq\Sigma$, of an axially symmetric closed hypersurface $\Sigma$, is called \textbf{regular} if it is obtained as the surface of revolution of a subcurve which is regular.

\end{defin}

We now present the concept of space like surfaces which was introduced in \cite{3}. For more details and results on this type of surfaces see \cite[Chap VI]{3}.

\begin{defin}

A smooth, connected hypersurface $\Sigma$ in $\Bbb R^{n + 1}$ is called \textbf{space like} if at each point $x\in\Sigma$ the normal $\mathbf{N}(x)$, of $\Sigma$ at $x$, satisfies $\mathbf{N}_{n + 1}^{2}(x)\geq\sum_{i = 1}^{n}\mathbf{N}_{i}^{2}(x)$. That is, at each point $x\in\Sigma$ the normal $\mathbf{N}(x)$ is contained in the closure of the interior of the right circular cone with apex at $x$.

\end{defin}

\begin{remark}

Geometrically, space like surfaces can be described as surfaces which cannot "approach infinity" faster than a cone. More accurately, if $\Sigma$ is space like then at each point $x\in\Sigma$, the upper part of the interior of the right circular cone with apex at $x$ is contained above $\Sigma$. Similarly, the lower part of the interior of this cone is contained below $\Sigma$. We will use this fact in the proof of the main result.

\end{remark}


\begin{figure}

\caption{\small{For the hypersurface $\Sigma\subset\Bbb R^{3}$ in the left image below the red, blue, yellow and green areas are the connected components of $\Sigma\setminus\Sigma'$ where $\Sigma'$ is the singularity set of $\Sigma$. A 2D section of this image is shown in the right image. Observe that the tangent planes, at the boundary points of these connected components, pass through the north pole}.}
\vskip0.5cm

\hskip-1cm\begin{tikzpicture}

\begin{axis}[view={30}{10}]
\addplot3 [
surf, red, z buffer=sort,
samples=20,domain = -2.95 / 2:-0.2, y domain = 0:2 * pi,
] (
{(1 / 2 + (1 / 5) * sin(deg(3 * x + 3.1))) * (cos(deg(x))) * cos(deg(y))},
{(1 / 2 + (1 / 5) * sin(deg(3 * x + 3.1))) * (cos(deg(x))) * sin(deg(y))},
{(1 / 2 + (1 / 5) * sin(deg(3 * x + 3.1))) * (sin(deg(x))) - 1 / 5}
);

\addplot3 [
surf, blue, z buffer=sort,
samples=20,domain = -0.2:0.6, y domain = 0:2 * pi,
] (
{(1 / 2 + (1 / 5) * sin(deg(3 * x + 3.1))) * (cos(deg(x))) * cos(deg(y))},
{(1 / 2 + (1 / 5) * sin(deg(3 * x + 3.1))) * (cos(deg(x))) * sin(deg(y))},
{(1 / 2 + (1 / 5) * sin(deg(3 * x + 3.1))) * (sin(deg(x))) - 1 / 5}
);

\addplot3 [
surf, yellow, z buffer=sort,
samples=20,domain = 0.6:1.1, y domain = 0:2 * pi,
] (
{(1 / 2 + (1 / 5) * sin(deg(3 * x + 3.1))) * (cos(deg(x))) * cos(deg(y))},
{(1 / 2 + (1 / 5) * sin(deg(3 * x + 3.1))) * (cos(deg(x))) * sin(deg(y))},
{(1 / 2 + (1 / 5) * sin(deg(3 * x + 3.1))) * (sin(deg(x))) - 1 / 5}
);

\addplot3 [
surf, green, z buffer=sort,
samples=20,domain = 1.1:2.06, y domain = 0:2 * pi,
] (
{(1 / 2 + (1 / 5) * sin(deg(3 * x + 3.1))) * (cos(deg(x))) * cos(deg(y))},
{(1 / 2 + (1 / 5) * sin(deg(3 * x + 3.1))) * (cos(deg(x))) * sin(deg(y))},
{(1 / 2 + (1 / 5) * sin(deg(3 * x + 3.1))) * (sin(deg(x))) - 1 / 5}
);

\end{axis}

\end{tikzpicture}\begin{tikzpicture}

\begin{axis}

\addplot [black, samples = 50, domain = -pi:pi] ({cos(deg(x))}, {sin(deg(x))});

\draw [fill] (1000, 2000) circle (0.05cm);

\addplot [red, samples = 50, domain = -3.2 / 2 : -0.2]
(
    {(0.5 + 0.2 * sin(deg(3 * x + 3.1))) * cos(deg(x))},
    {(0.5 + 0.2 * sin(deg(3 * x + 3.1))) * sin(deg(x)) - 0.2}
);

\addplot [red, samples = 50, domain = -3.2 / 2 : -0.2]
(
    {- (0.5 + 0.2 * sin(deg(3 * x + 3.1))) * cos(deg(x))},
    {(0.5 + 0.2 * sin(deg(3 * x + 3.1))) * sin(deg(x)) - 0.2}
);

\addplot [blue, samples = 50, domain = -0.2 : 0.6]
(
    {(0.5 + 0.2 * sin(deg(3 * x + 3.1))) * cos(deg(x))},
    {(0.5 + 0.2 * sin(deg(3 * x + 3.1))) * sin(deg(x)) - 0.2}
);

\addplot [blue, samples = 50, domain = -0.2 : 0.6]
(
    {- (0.5 + 0.2 * sin(deg(3 * x + 3.1))) * cos(deg(x))},
    {(0.5 + 0.2 * sin(deg(3 * x + 3.1))) * sin(deg(x)) - 0.2}
);

\addplot [orange, samples = 50, domain = 0.6 : 1.1]
(
    {(0.5 + 0.2 * sin(deg(3 * x + 3.1))) * cos(deg(x))},
    {(0.5 + 0.2 * sin(deg(3 * x + 3.1))) * sin(deg(x)) - 0.2}
);

\addplot [orange, samples = 50, domain = 0.6 : 1.1]
(
    {- (0.5 + 0.2 * sin(deg(3 * x + 3.1))) * cos(deg(x))},
    {(0.5 + 0.2 * sin(deg(3 * x + 3.1))) * sin(deg(x)) - 0.2}
);

\addplot [green, samples = 50, domain = 1.1 : 2.06]
(
    {(0.5 + 0.2 * sin(deg(3 * x + 3.1))) * cos(deg(x))},
    {(0.5 + 0.2 * sin(deg(3 * x + 3.1))) * sin(deg(x)) - 0.2}
);

\addplot [green, samples = 50, domain = 1.1 : 2.06]
(
    {- (0.5 + 0.2 * sin(deg(3 * x + 3.1))) * cos(deg(x))},
    {(0.5 + 0.2 * sin(deg(3 * x + 3.1))) * sin(deg(x)) - 0.2}
);

\addplot [black, samples = 50, domain = -0.25 : 0]({x},{1 + 2.8 * x});

\addplot [black, samples = 50, domain = -0.25 : 0]({-x},{1 + 2.8 * x});

\addplot [black, samples = 50, domain = -0.6 : 0]({x},{1 + 2.16 * x});

\addplot [black, samples = 50, domain = -0.6 : 0]({-x},{1 + 2.16 * x});

\addplot [black, samples = 50, domain = -0.25 : 0]({x},{1 + 4.1 * x});

\addplot [black, samples = 50, domain = -0.25 : 0]({-x},{1 + 4.1 * x});

\end{axis}

\begin{axis}

\addplot [black, samples = 50, domain = -pi:pi] ({cos(deg(x))}, {sin(deg(x))});

\draw [fill] (1000, 2000) circle (0.05cm);

\addplot [red, samples = 50, domain = -3.2 / 2 : -0.2]
(
    {(0.5 + 0.2 * sin(deg(3 * x + 3.1))) * cos(deg(x))},
    {(0.5 + 0.2 * sin(deg(3 * x + 3.1))) * sin(deg(x)) - 0.2}
);

\addplot [red, samples = 50, domain = -3.2 / 2 : -0.2]
(
    {- (0.5 + 0.2 * sin(deg(3 * x + 3.1))) * cos(deg(x))},
    {(0.5 + 0.2 * sin(deg(3 * x + 3.1))) * sin(deg(x)) - 0.2}
);

\addplot [blue, samples = 50, domain = -0.2 : 0.6]
(
    {(0.5 + 0.2 * sin(deg(3 * x + 3.1))) * cos(deg(x))},
    {(0.5 + 0.2 * sin(deg(3 * x + 3.1))) * sin(deg(x)) - 0.2}
);

\addplot [blue, samples = 50, domain = -0.2 : 0.6]
(
    {- (0.5 + 0.2 * sin(deg(3 * x + 3.1))) * cos(deg(x))},
    {(0.5 + 0.2 * sin(deg(3 * x + 3.1))) * sin(deg(x)) - 0.2}
);

\addplot [orange, samples = 50, domain = 0.6 : 1.1]
(
    {(0.5 + 0.2 * sin(deg(3 * x + 3.1))) * cos(deg(x))},
    {(0.5 + 0.2 * sin(deg(3 * x + 3.1))) * sin(deg(x)) - 0.2}
);

\addplot [orange, samples = 50, domain = 0.6 : 1.1]
(
    {- (0.5 + 0.2 * sin(deg(3 * x + 3.1))) * cos(deg(x))},
    {(0.5 + 0.2 * sin(deg(3 * x + 3.1))) * sin(deg(x)) - 0.2}
);

\addplot [green, samples = 50, domain = 1.1 : 2.06]
(
    {(0.5 + 0.2 * sin(deg(3 * x + 3.1))) * cos(deg(x))},
    {(0.5 + 0.2 * sin(deg(3 * x + 3.1))) * sin(deg(x)) - 0.2}
);

\addplot [green, samples = 50, domain = 1.1 : 2.06]
(
    {- (0.5 + 0.2 * sin(deg(3 * x + 3.1))) * cos(deg(x))},
    {(0.5 + 0.2 * sin(deg(3 * x + 3.1))) * sin(deg(x)) - 0.2}
);

\addplot [black, samples = 50, domain = -0.25 : 0]({x},{1 + 2.8 * x});

\addplot [black, samples = 50, domain = -0.25 : 0]({-x},{1 + 2.8 * x});

\addplot [black, samples = 50, domain = -0.6 : 0]({x},{1 + 2.16 * x});

\addplot [black, samples = 50, domain = -0.6 : 0]({-x},{1 + 2.16 * x});

\addplot [black, samples = 50, domain = -0.25 : 0]({x},{1 + 4.1 * x});

\addplot [black, samples = 50, domain = -0.25 : 0]({-x},{1 + 4.1 * x});

\end{axis}

\end{tikzpicture}

\end{figure}
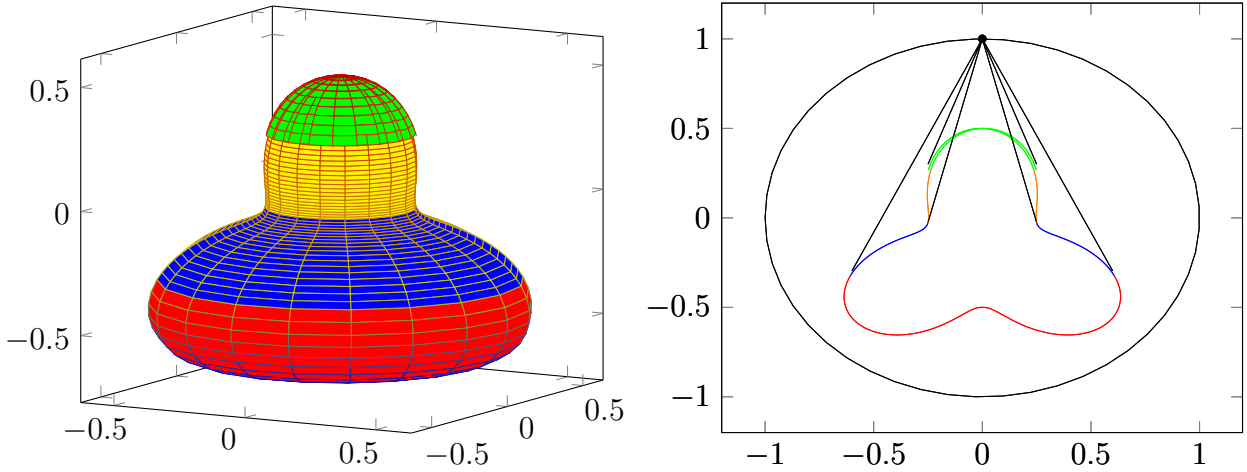


\vskip3cm
\hskip-0.6cm Let
\vskip-0.2cm
$$\hskip-7.8cm\Bbb S_{0} = \Bbb S^{n}\left(e_{n + 1} / 2, 1 / 2\right)\subset \overline{\Bbb B(0,1)}$$
be the $n$ dimensional sphere with the center at $e_{n + 1} / 2 = (0,...,0,1/2)$ and radius $1 / 2$. For a closed, smooth hypersurface $\Sigma$ in $\Bbb R^{n + 1}$, which is contained inside the unit sphere $\Bbb S^{n}$, define the map
\vskip-0.2cm
$$\hskip-8.85cm\Psi_{0}:\Sigma\setminus\Sigma_{0}\rightarrow\Bbb B(0,1)\setminus\Bbb S_{0}$$
as follows: for a point $x$ in $\Sigma\setminus\Sigma_{0}$ let $\mathrm{H}_{x}(\Sigma)$ be the tangent plane of $\Sigma$ at $x$ and let $x'$ be its closest point to the origin, then $\Psi_{0}(x) = x'$. Equivalently, if $\mathrm{H}_{x}(\Sigma)$ has unit normal $\psi$ and distance $\rho\geq0$ from the origin (the sign of $\psi$ is chosen so that $\rho\psi\in\mathrm{H}_{x}(\Sigma)$) then $\Psi_{0}(x) = \rho\psi$.

If $\Sigma$ is given by the equation $F(x) = 0$ then $\Psi_{0}$ is given explicitly by
\vskip-0.2cm
$$\hskip-6cm\Psi_{0}(x) = \left[\frac{x\cdot\nabla F(x)}{|\nabla F(x)|^{2}}\right]\nabla F(x), x\in\Sigma\setminus\Sigma_{0}.$$
Observe that if $\mathrm{H}_{x}(\Sigma)$ is a tangent plane to $\Sigma$ at a point $x\in\Sigma_{0}$ then, if $\mathrm{H}_{x}(\Sigma)$ passes through the north pole $e_{n + 1}$ then the image of $x$ under $\Psi_{0}$ will be a point in $\Bbb S_{0}$ while if $\mathrm{H}_{x}(\Sigma)$ passes through the origin then $x$ will be mapped to the origin. Hence, $\Bbb S_{0}$ is omitted from the range of $\Psi_{0}$. Of course, $\Psi_{0}$ can be defined on the whole of $\Sigma$. However, later for the proof of the main result we will have to compose $\Psi_{0}$ with a map which is not defined on $\Bbb S_{0}$. Hence, we need to reduce the domain of definition of $\Psi_{0}$ accordingly.


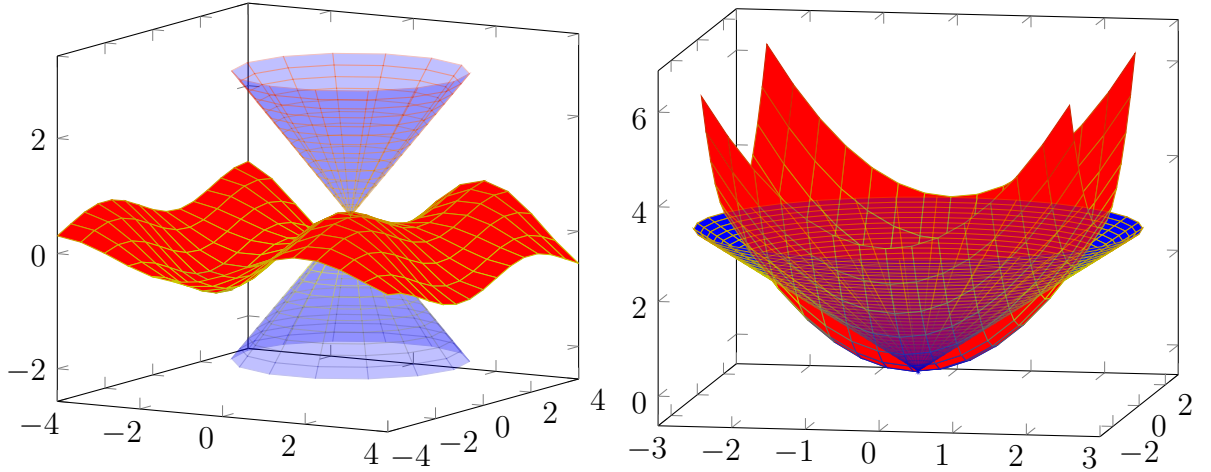
\begin{figure}

\caption{\small{In left and right images we give two examples of hypersurfaces $\Gamma_{1}$ and  $\Gamma_{2}$ respectively where $\Gamma_{1}$ is space-like while $\Gamma_{2}$ is not. Observe that the upper and lower parts of the right circular cone, which is emanating from a point on $\Gamma_{1}$, are respectively above and below $\Gamma_{1}$ while the upper part of the right circular cone, which is emanating from a point on $\Gamma_{2}$, intersects $\Gamma_{2}$}.}
\vskip0.5cm

\begin{tikzpicture}

\begin{axis}[view={30}{10}]

\addplot3 [
surf, blue, z buffer=sort, opacity = 0.25,
samples=15,domain = -2.5:0, y domain = -180:180,
] (
{0.5 + x * cos(y)},
{0.5 + x * sin(y)},
{sin(120) / 2 + x}
);

\addplot3 [
surf, red, z buffer=sort,
samples=15,domain = -4:4, y domain = -4:4,
] (
{x},
{y},
{sin(60 * x) / 2 + cos(60 * (x + y)) / 4}
);

\addplot3 [
surf, blue, z buffer=sort, opacity = 0.25,
samples=15,domain = 0:2.5, y domain = -180:180,
] (
{0.5 + x * cos(y)},
{0.5 + x * sin(y)},
{sin(120) / 2 + x}
);

\end{axis}

\end{tikzpicture}\begin{tikzpicture}

\begin{axis}[view={10}{10}]

\addplot3 [
surf, blue, z buffer=sort, opacity = 1,
samples=35,domain = 0:3, y domain = -180:180,
] (
{x * cos(y)},
{x * sin(y)},
{x}
);

\addplot3 [
surf, red, z buffer=sort,
samples=15,domain = -2.5:2.5, y domain = -2.5:2.5,
] (
{x},
{y},
{x * x / 2 + y * y / 2}
);

\addplot3 [
surf, blue, z buffer=sort, opacity = 0.25,
samples=35,domain = 0:3, y domain = -180:180,
] (
{x * cos(y)},
{x * sin(y)},
{x}
);

\end{axis}

\end{tikzpicture}

\end{figure}


Now we are ready to define the concept of vanishing of the spherical transform on a family of subspheres with their infinitesimal perturbations. For abbreviation, we denote by $\Bbb S_{x}(\Sigma)$ the subsphere of $\Bbb S^{n}$ obtained by intersection of $\Bbb S^{n}$ with the tangent plane $\mathrm{H}_{x}(\Sigma)$ of the hypersurface $\Sigma$ at the point $x$.

\begin{defin}

Let $\Sigma$ be a closed, smooth hypersurface in $\Bbb R^{n + 1}$, which is contained inside $\Bbb S^{n}$, let $\Delta$ be a subset of $\Sigma\setminus\Sigma'$ and let $f$ be a continuous function defined on $\Bbb S^{n}$. Then, we say that the spherical transform $\mathcal{S}f$ of $f$ vanishes on the set of subspheres $\Bbb S_{x}(\Sigma), x\in \Delta$, together with their infinitesimal perturbations, if
\begin{equation}\hskip-2cm(\mathcal{S}_{0}f)(\Psi_{0}(x)) = 0, \partial_{i}(\mathcal{S}_{0}f)(\Psi_{0}(x)) = 0, i = 1,..., n + 1, \forall x\in \Delta\setminus\Sigma_{0}.\end{equation}

\end{defin}

Hence, for the notion of infinitesimal perturbations we first identify the family $\Omega$ of subpheres of integration with a submanifold $\mathbf{M}\subset\Bbb B(\bar{0},1)$ with the help of the function $\Psi_{0}$. Then, we say that the spherical transform vanishes on subspheres in $\Omega$ with their infinitesimal perturbations if the modified spherical transform vanishes on $\mathbf{M}$ with its first order derivatives.

Finally, we define the concept of the tangent cone and its projection set for axially symmetric surfaces. First, observe that if the smooth hypersurface $\Sigma\subset\Bbb R^{n + 1}$ is axially symmetric and has a simply connected interior then the last coordinate axis $x_{n + 1}$ intersects $\Sigma$ at exactly two points $p^{+}$ and $p^{-}$ where $p^{+}\cdot e_{n + 1} > p^{-}\cdot e_{n + 1}$. Let $\mathcal{U}$ be the component of $\Sigma\setminus\Sigma'$ which contains the point $p^{+}$, we will call $\mathcal{U}$ the \textbf{upper connected component} of $\Sigma\setminus\Sigma'$. We define the \textbf{tangent cone} $C_{\Sigma}$ of $\Sigma$ as the union of lines which pass through the north pole $e_{n + 1}$ and a boundary point of $\mathcal{U}$. Observe that $\Bbb S^{n}\setminus (C_{\Sigma}\setminus\{e_{n + 1}\})$ consists of two connected components and we denote by $\Pi_{\Sigma}\subseteq\Bbb S^{n}$ the "lower component"  which contains the south pole $-e_{n + 1}$, i.e., the intersection of the interior of the cone $C_{\Sigma}$ with $\Bbb S^{n}$. We then say that $\Pi_{\Sigma}$ is the \textbf{projected set of the cone} $C_{\Sigma}$ on $\Bbb S^{n}$ generated by the hypersurface $\Sigma$.

\section{The Main Results}

The main result of this paper is given in Theorem 3.1 below. To make the formulation less cumbersome let us denote by $\mathbf{\Sigma}$ the set of axially symmetric, closed, smooth hypersurfaces with a simply connected interior and which are contained inside $\Bbb S^{n}$. Let us also denote by $C_{0}(\Bbb S^{n})$ the set of continuous functions on $\Bbb S^{n}$ which are compactly supported with respect to the north pole, i.e., $f$ is in $C_{0}(\Bbb S^{n})$ if and only if $f$ is continuous and vanishes in a neighborhood of $e_{n + 1}$.

\begin{thm}

Let $\Sigma$ be a hypersurface in $\mathbf{\Sigma}$ and let $\mathcal{U}$ be the upper connected component of $\Sigma\setminus\Sigma'$ and assume that it is also regular. Let $Sf$ be the spherical transform of a function $f$ in $C_{0}(\Bbb S^{n})$ and suppose that it vanishes on the family of subspheres $\Bbb S_{x}(\Sigma), x\in\mathcal{U}$, together with their infinitesimal perturbations. Then, $f$ is supported in the projection set $\Pi_{\Sigma}$ of the cone $C_{\Sigma}$ on $\Bbb S^{n}$ generated by the hypersurface $\Sigma$.

\end{thm}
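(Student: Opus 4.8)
The plan is to transport the whole problem, through the stereographic projection, from the spherical transform of $f$ on $\Bbb S^{n}$ to the spherical mean transform of an associated function $g$ on $\Bbb R^{n}$, and then to invoke the space-like uniqueness theory of \cite[Chap VI]{3}. First I would set $g(x) = 2^{n - 1}f(\Lambda^{-1}(x))/(1 + |x|^{2})^{n - 1}$ as in (2.3). Since $f\in C_{0}(\Bbb S^{n})$ vanishes in a neighborhood of $e_{n + 1}$, and $\Lambda$ sends such a neighborhood to the exterior of a large ball, $g$ is continuous and compactly supported on $\Bbb R^{n}$. By the relation (2.2), for every $x\in\mathcal{U}\setminus\Sigma_{0}$ whose tangent plane $\mathrm{H}_{x}(\Sigma)$ equals $\mathrm{H}_{\psi, \rho}$ we have $(\mathcal{S}_{0}f)(\Psi_{0}(x)) = (\mathcal{R}g)(c(x), r(x))$, where $c(x) = \psi^{\ast}/(\rho - \psi_{n + 1})$ and $r(x) = \sqrt{1 - \rho^{2}}/|\rho - \psi_{n + 1}|$ are the center and radius supplied by Lemma 2.1. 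The image $\mathcal{U}' = \{(c(x), r(x)):x\in\mathcal{U}\}$ is an $n$ dimensional hypersurface in $\Bbb R^{n}\times\Bbb R^{+}$, and I would verify that the assignment $\Psi_{0}(x)\mapsto(c(x), r(x))$ is a local diffeomorphism. Consequently the first order vanishing required in Definition 2.8 is equivalent to the vanishing of $\mathcal{R}g$ \emph{together with its full gradient} in the $(x, t)$ variables at every point of $\mathcal{U}'$; the hypothesis that the first order derivatives vanish is precisely what furnishes Cauchy data for the Darboux type equation satisfied by the spherical means.

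The crux, and the step I expect to be the main obstacle, is to show that $\mathcal{U}'$ is \textbf{space-like} in the sense of Definition 2.7, i.e.\ that its normal $\mathbf{N}$ satisfies $\mathbf{N}_{n + 1}^{2}\geq\sum_{i = 1}^{n}\mathbf{N}_{i}^{2}$ at each point. I would parameterize $\mathcal{U}$ as the surface of revolution of its regular generating curve $\gamma$, compute $c$ and $r$ explicitly as functions of the curve parameter and the rotation angle, and then compute the normal of $\mathcal{U}'$. Writing $\mathcal{U}'$ locally as a graph $t = r(c)$ over the center coordinates, the space-like inequality becomes $|\nabla_{c}\, r|\leq1$, i.e.\ the stereographic family of spheres changes its radius no faster than it translates its center. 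My expectation is that the regularity hypothesis $\gamma_{1}'\gamma_{2}'' - \gamma_{1}''\gamma_{2}'\neq0$ (nonvanishing curvature, hence local convexity) translates, after the stereographic change of variables, into exactly this bound. I would also check that as $x$ approaches the boundary $\partial\mathcal{U}\subset\Sigma'$, where the tangent plane meets the north pole, one has $\rho\to\psi_{n + 1}$ and hence $r(x)\to\infty$, so that $\mathcal{U}'$ is a complete space-like hypersurface escaping to infinity precisely over the rim of the tangent cone $C_{\Sigma}$.

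With $\mathcal{U}'$ space-like and vanishing Cauchy data established, I would apply the uniqueness theorem of \cite[Chap VI]{3} to conclude that $g$ vanishes throughout the region of $\Bbb R^{n}$ lying in the domain of dependence cut out by $\mathcal{U}'$, concretely the component of $\Bbb R^{n}$ lying "below" $\mathcal{U}'$ in the light-cone geometry governed by the spherical means (this is where the Remark after Definition 2.7, that the cone lies above and below a space-like surface, is used). Finally I would transfer this vanishing region back to the sphere through $\Lambda^{-1}$: since $g(x)$ and $f(\Lambda^{-1}(x))$ vanish simultaneously by (2.3), and since $\partial\mathcal{U}$ corresponds under the whole construction to the tangent cone $C_{\Sigma}$, I would identify $\Lambda^{-1}$ of the vanishing region with $\Bbb S^{n}\setminus\Pi_{\Sigma}$, which yields that $f$ is supported in the projected set $\Pi_{\Sigma}$. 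The second delicate point is to confirm that these two regions match up exactly, i.e.\ that the domain of dependence determined by $\mathcal{U}'$ pulls back to the complement of the interior of $C_{\Sigma}\cap\Bbb S^{n}$; I would handle this by tracking the boundary correspondence through $\Psi_{0}$, $\Lambda^{-1}$ and the definition of the cone, together with a connectedness argument ruling out the spurious component containing the north pole.
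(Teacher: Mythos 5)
Your overall route coincides with the paper's: transfer to the spherical mean transform of $g$ via the stereographic projection and (2.2)--(2.3), prove that the image surface $\mathcal{U}'=\Phi_{\Sigma}(\mathcal{U})$ is space like (this is exactly the paper's Theorem 3.6, proved by reducing to the generating curve and using the regularity condition $\gamma_{1}'\gamma_{2}''-\gamma_{1}''\gamma_{2}'\neq0$, as you anticipate), and then invoke the Courant--Hilbert energy argument for the Darboux equation with vanishing Cauchy data on $\mathcal{U}'$. However, there is a genuine gap at the step where you pass from the uniqueness theorem to the vanishing of $g$. The uniqueness theorem lives on $\Bbb R^{n}\times\Bbb R^{+}$: it tells you that $\mathcal{R}g$ vanishes at every point $(x,t)$ lying \emph{above} $\mathcal{U}'$, i.e.\ that the integrals of $g$ over the corresponding family of large spheres vanish. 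It does not, by itself, give the vanishing of $g$ on any subset of $\Bbb R^{n}$, because the region above a space like surface sitting at strictly positive heights $t=r(x)>0$ never reaches the slice $t=0$ where $\mathcal{Q}(x,0)$ recovers $g(x)$. The paper closes this gap with an extra ingredient your proposal is missing: for a tangency point $u$ with image $u'=(u'',u'_{n+1})$, the cone above $u'$ consists exactly of the spheres containing the fixed sphere $\mathbf{S}=\Bbb S^{n-1}(u'',u'_{n+1})$, so the integrals of $g$ vanish over every sphere enclosing $\mathbf{S}$; since $g$ is continuous and compactly supported, Helgason's support theorem \cite[Chap.~I, Lemma~2.7]{11} then yields $g\equiv0$ outside $\mathbf{S}$. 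This is precisely where the hypothesis $f\in C_{0}(\Bbb S^{n})$ is consumed --- you note that $g$ is compactly supported but never use it, and without a support theorem of this kind the argument does not close (vanishing of all the relevant spherical means is strictly weaker than vanishing of $g$).

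A secondary, smaller issue is the endgame. The cap-by-cap conclusion (your ``vanishing region'') is only available for tangent planes at points of $\mathcal{U}\setminus\Sigma_{0}$, i.e.\ planes avoiding both $e_{n+1}$ and the origin, and the union of the corresponding caps does not by itself exhaust $\Bbb S^{n}\setminus\Pi_{\Sigma}$: the extreme planes, tangent to the cone $C_{\Sigma}$ along $\partial\mathcal{U}$, pass through $e_{n+1}$ and are excluded. The paper handles this with an explicit limiting argument along sequences of admissible tangent planes $\mathrm{H}_{i}\to\mathrm{H}_{0}$, split into two cases according to whether the $\mathrm{H}_{i}$ meet the axis segment between $\pm e_{n+1}$ (in the second case one must reflect to identify the correct cap), followed by the rotation argument that sweeps out the exterior of $C_{\Sigma}$ on $\Bbb S^{n}$. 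You flag this as a delicate point but do not supply the argument; as written, the identification of the pulled-back vanishing set with $\Bbb S^{n}\setminus\Pi_{\Sigma}$ is asserted rather than proved.
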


The method in the proof of Theorem 3.1 is based on the following insight about the manifold of hyperspheres in $\Bbb R^{n}$ which is obtained as the set of images, under the stereographic projection, of the family of subspheres $\Bbb S_{x}(\Sigma), x\in\mathcal{U}$ where $\mathcal{U}$ is the upper connected component of $\Sigma\setminus\Sigma'$. Observe that every hypersphere in $\Bbb R^{n}$ with center at $x$ and radius $t\geq0$ can be identified with the point $(x, t)$ in $\Bbb R^{n}\times\Bbb R^{+}$. Our main observation is that if every hypersphere in $\Bbb R^{n}$, which is obtained as an image, under the stereographic projection, of a subsphere $\Bbb S_{x}(\Sigma)$, for some $x\in\mathcal{U}$, is identified with its corresponding point in $\Bbb R^{n}\times\Bbb R^{+}$ then the union of these points is a space like surface. Since this result is important by itself it will be formulated in Theorem 3.6 below as the second main result of this paper.

Results concerning uniqueness theorems and support characterization for functions satisfying second degree hyperbolic partial differential equations with initial data on a space like surface have been obtained in \cite[Chapter VI]{3}. Using the main observation as discussed above, the connection between the spherical and spherical mean transforms (2.2) and the fact that the spherical mean transform satisfies the Darboux's equation we obtain, using the results obtained in \cite{3}, a support characterization for the function $g$ where the relation between $f$ and $g$ is given by (2.3). Finally, using the inverse stereographic projection and the support characterization obtained for $g$ we will be able to characterize the support of $f$.

Before formulating Theorem 3.6 we introduce the concept of the surface map. Let us first define the following function

$$\hskip-5.35cm\Psi:\Bbb S^{n}\times[0,1)\setminus\{(\psi, \rho):\rho = \psi_{n + 1}\}\rightarrow\Bbb R^{n}\times\Bbb R^{+},$$
\begin{equation}\hskip-5.1cm\Psi(\psi, \rho) = \left(\psi^{\ast} / (\rho - \psi_{n + 1}), \sqrt{1 - \rho^{2}} / |\rho - \psi_{n + 1}|\right).\end{equation}
Let us explain on a geometrical level how $\Psi$ operates. For a point $(\psi, \rho)$ in the domain of $\Psi$ we consider the subsphere $\Bbb S_{\psi, \rho}^{n - 1}$ of $\Bbb S^{n}$ which does not pass through the north pole $e_{n + 1}$ because of the condition $\rho\neq\psi_{n + 1}$. The image of this subsphere, under the stereographic projection, is a hypersphere in $\Bbb R^{n}$ and from Lemma 2.1 its center and radius respectively are given as in the right hand side of (3.1). Finally, we identify this hypersphere with its corresponding point in $\Bbb R^{n}\times\Bbb R^{+}$.

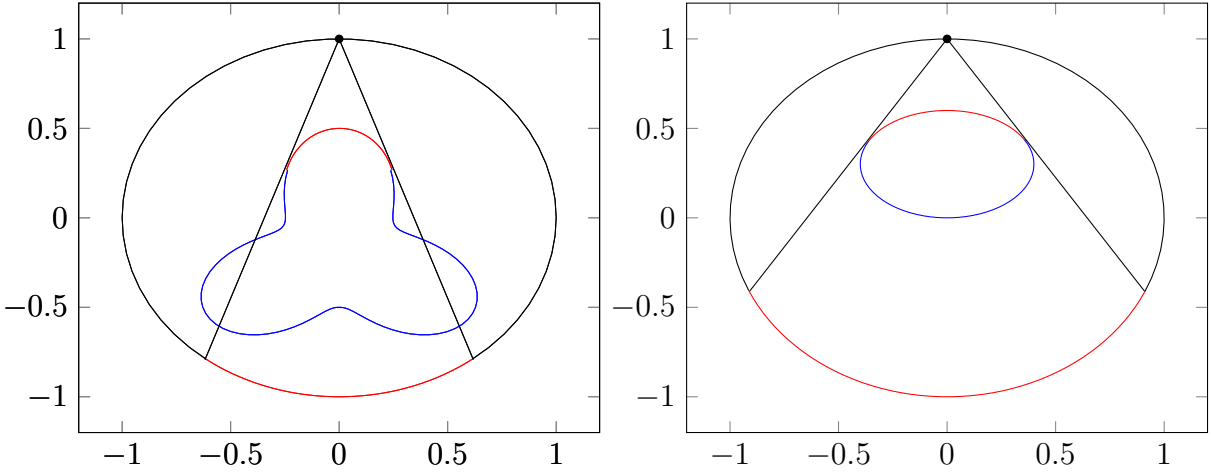
\begin{figure}

\caption{\small{In the two images below the red part on the surface $\Sigma$ is the upper connected component
of $\Sigma\setminus\Sigma'$. The two black segments which are tangent to $\Sigma$ are the cone $C_{\Sigma}$ and the red part on the unit circle is the projection set $\Pi_{\Sigma}$. Of course, these images are two dimensional sections of the corresponding surfaces in higher dimensions.}}
\vskip0.5cm

\hskip-1cm\begin{tikzpicture}

\begin{axis}

\addplot [blue, samples = 50, domain = -3.2 / 2 : 1.1]
(
    {(0.5 + 0.2 * sin(deg(3 * x + 3.1))) * cos(deg(x))},
    {(0.5 + 0.2 * sin(deg(3 * x + 3.1))) * sin(deg(x)) - 0.2}
);

\addplot [blue, samples = 50, domain = -3.2 / 2 : 1.1]
(
    {- (0.5 + 0.2 * sin(deg(3 * x + 3.1))) * cos(deg(x))},
    {(0.5 + 0.2 * sin(deg(3 * x + 3.1))) * sin(deg(x)) - 0.2}
);

\addplot [red, samples = 50, domain = 1.1 : 2.06]
(
    {- (0.5 + 0.2 * sin(deg(3 * x + 3.1))) * cos(deg(x)) - 0.005},
    {(0.5 + 0.2 * sin(deg(3 * x + 3.1))) * sin(deg(x)) - 0.2}
);

\addplot [black, samples = 50, domain = -pi / 4 - 0.125: 5 * pi / 4 + 0.125]
(
    {cos(deg(x))}, {sin(deg(x))}
);

\addplot [red, samples = 50, domain = 5 * pi / 4 + 0.125: 7 * pi / 4 - 0.125]
(
    {cos(deg(x))}, {sin(deg(x))}
);

\addplot [black, samples = 50, domain = -0.618 : 0]({x},{1 + 2.9 * x});

\addplot [black, samples = 50, domain = -0.618 : 0]({-x},{1 + 2.9 * x});

\end{axis}

\begin{axis}

\addplot [blue, samples = 50, domain = -3.2 / 2 : 1.1]
(
    {(0.5 + 0.2 * sin(deg(3 * x + 3.1))) * cos(deg(x))},
    {(0.5 + 0.2 * sin(deg(3 * x + 3.1))) * sin(deg(x)) - 0.2}
);

\addplot [blue, samples = 50, domain = -3.2 / 2 : 1.1]
(
    {- (0.5 + 0.2 * sin(deg(3 * x + 3.1))) * cos(deg(x))},
    {(0.5 + 0.2 * sin(deg(3 * x + 3.1))) * sin(deg(x)) - 0.2}
);

\addplot [red, samples = 50, domain = 1.1 : 2.06]
(
    {- (0.5 + 0.2 * sin(deg(3 * x + 3.1))) * cos(deg(x)) - 0.005},
    {(0.5 + 0.2 * sin(deg(3 * x + 3.1))) * sin(deg(x)) - 0.2}
);

\addplot [black, samples = 50, domain = -pi / 4 - 0.125: 5 * pi / 4 + 0.125]
(
    {cos(deg(x))}, {sin(deg(x))}
);

\addplot [red, samples = 50, domain = 5 * pi / 4 + 0.125: 7 * pi / 4 - 0.125]
(
    {cos(deg(x))}, {sin(deg(x))}
);

\draw [fill] (1000, 2000) circle (0.05cm);

\addplot [black, samples = 50, domain = -0.618 : 0]({x},{1 + 2.9 * x});

\addplot [black, samples = 50, domain = -0.618 : 0]({-x},{1 + 2.9 * x});

\end{axis}

\end{tikzpicture}\begin{tikzpicture}

\begin{axis}

\addplot [black, samples = 50, domain = -0.425: 3.56]
(
    {cos(deg(x))}, {sin(deg(x))}
);

\addplot [red, samples = 50, domain = 3.56: -0.425 + 2 * pi]
(
    {cos(deg(x))}, {sin(deg(x))}
);

\addplot [red, samples = 50, domain = 0.45: 2.7]
(
    {0.4 * cos(deg(x))}, {0.3 * sin(deg(x)) + 0.3}
);

\addplot [blue, samples = 50, domain = 2.7: 6.75]
(
    {0.4 * cos(deg(x))}, {0.3 * sin(deg(x)) + 0.3}
);

\addplot [black, samples = 50, domain = -0.91 : 0]({x},{1 + 1.55 * x});

\addplot [black, samples = 50, domain = -0.91 : 0]({- x},{1 + 1.55 * x});

\draw [fill] (1000, 2000) circle (0.05cm);

\end{axis}

\end{tikzpicture}

\end{figure}

\begin{defin}

For a closed, smooth hypersurface $\Sigma$ in $\Bbb R^{n + 1}$, which is contained inside $\Bbb S^{n}$, define its \textbf{surface map} $\Phi_{\Sigma}$ by
$$\hskip-6.55cm\Phi_{\Sigma}:\Sigma\setminus\Sigma'\rightarrow\Bbb R^{n + 1}, \Phi_{\Sigma}(x) = \Psi\left(\psi_{x}, \rho_{x}\right)$$
where $\psi_{x}\in\Bbb S^{n}$ and $\rho_{x}\geq0$ are respectively the normal and distance, from the origin, of the tangent plane $\mathrm{H}_{x}(\Sigma)$ of $\Sigma$ at $x$ (the sign of $\psi_{x}$ is chosen such that $\rho_{x}\psi_{x}\in\mathrm{H}_{x}(\Sigma)$ and in case where $\rho_{x} = 0$ one can choose any sign for $\psi_{x}$).

\end{defin}

\begin{remark}

Similarly to the function $\Psi$ the surface map $\Phi_{\Sigma}$ operates as follows. For each point $x$ in $\Sigma$ we consider its tangent plane $\mathrm{H}_{x}(\Sigma)$ and then intersect it with the unit sphere $\Bbb S^{n}$. Observe that since $\Sigma$ is contained inside $\Bbb S^{n}$ this intersection in never empty. This intersection is a subsphere in $\Bbb S^{n}$ which is then projected to a hypersphere in $\Bbb R^{n}$ by the stereographic projection $\Lambda$. Finally, the projected hypersphere is being identified with a point in $\Bbb R^{n + 1}$ corresponding to its center and radius respectively.

\end{remark}

\begin{remark}

Observe that for a closed, smooth hypersurface $\Sigma$ in $\Bbb R^{n + 1}$, which is contained inside $\Bbb S^{n}$, $\Phi_{\Sigma}$ is defined on $\Sigma\setminus\Sigma'$ rather than on the whole of $\Sigma$. This is because on $\Sigma'$ the tangent hyperplanes pass through the north pole $e_{n + 1}$ and thus their intersections with $\Bbb S^{n}$ will be spheres that also pass through $e_{n + 1}$. However, in this case the stereographic projection $\Lambda$ will map each such sphere to a hyperplane in $\Bbb R^{n}$ rather than to a hypersphere.

\end{remark}

\begin{remark}

Let $f$ and $g$ be continuously differentiable functions, defined on $\Bbb S^{n}$ and $\Bbb R^{n}$ respectively, which are related by equation (2.3) and let $\Sigma$ be a closed, smooth hypersurface which is contained inside $\Bbb S^{n}$. Then, the spherical transform $\mathcal{S}f$ of $f$ is given on the set of subspheres obtained by intersections of tangent planes to $\Sigma\setminus\Sigma'$ with $\Bbb S^{n}$ if and only if the spherical mean transform $\mathcal{R}g$ of $g$ is given on the image $\Phi_{\Sigma}(\Sigma\setminus\Sigma')$ of $\Sigma\setminus\Sigma'$ under $\Phi_{\Sigma}$.

\end{remark}

The second main result of this paper is the following.

\begin{thm}

Let $\Sigma$ be a hypersurface in $\mathbf{\Sigma}$ and let $\mathcal{V}$ be a connected component of $\Sigma\setminus\Sigma'$ ($\hskip0.05cm\Sigma'$ is the set of singularity of $\Sigma$) which is simply connected and regular. Then, the image $\Phi_{\Sigma}(\mathcal{V})$ of $\mathcal{V}$ under $\Phi_{\Sigma}$ is a space like surface.

\end{thm}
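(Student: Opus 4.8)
The plan is to exploit the axial symmetry of $\Sigma$ to reduce this $n$-dimensional statement to a one-dimensional one about a profile curve, and then to read off the space like condition as an inequality between the speeds of the center and of the radius of the associated family of spheres. Since $\Sigma$ is invariant under the rotations fixing the $x_{n+1}$-axis, and since these rotations act on the target $\Bbb R^{n}\times\Bbb R^{+}$ of $\Psi$ by rotating the center $\psi^{\ast}/(\rho-\psi_{n+1})\in\Bbb R^{n}$ while leaving the radius $\sqrt{1-\rho^{2}}/|\rho-\psi_{n+1}|$ fixed, the image $\Phi_{\Sigma}(\mathcal V)$ is invariant under the induced $SO(n)$-action on the $\Bbb R^{n}$-factor. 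Hence $\Phi_{\Sigma}(\mathcal V)$ is a surface of revolution generated by a profile curve $s\mapsto(\xi(s),r(s))$, where $\xi(s)=|\psi^{\ast}|/|\rho-\psi_{n+1}|$ is the distance of the center from the origin, $r(s)$ is the radius, and $s$ parametrizes the regular profile curve $\gamma$ of $\mathcal V$. First I would compute the unit normal of such a surface of revolution at a point $(\xi\omega,r)$, $\omega\in\Bbb S^{n-1}$: orthogonality to the $n-1$ revolution directions $(v,0)$, $v\perp\omega$, forces the normal to be proportional to $(r'(s)\,\omega,-\xi'(s))$, so Definition 2.6 reduces, after a short computation, to the single scalar inequality $\xi'(s)^{2}\geq r'(s)^{2}$ along the profile.

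The next step is to interpret this inequality geometrically. For a one-parameter family of spheres in $\Bbb R^{n}$ with centers moving radially, the condition $|r'(s)|<|\xi'(s)|$ says exactly that two infinitesimally close members meet transversally rather than being nested or tangent: the spheres $\Bbb S^{n-1}(c,r)$ and $\Bbb S^{n-1}(c+c'ds,r+r'ds)$ intersect iff $|r'(s)|<|c'(s)|$, and on the profile $|c'(s)|=|\xi'(s)|$ because the centers move radially (the complementary inequality $|c_{1}-c_{2}|<r_{1}+r_{2}$ being automatic for $s'$ near $s$ as long as $r(s)>0$). Because the spheres in $\Phi_{\Sigma}(\mathcal V)$ are, by Remark 3.3, the stereographic images of the subspheres $\Bbb S_{x}(\Sigma)=\mathrm{H}_{x}(\Sigma)\cap\Bbb S^{n}$, and because $\Lambda$ is a homeomorphism carrying subspheres to spheres and preserving their intersections, it suffices to show that for nearby $x=x(s),\,x'=x(s')\in\mathcal V$ the subspheres $\Bbb S_{x}(\Sigma)$ and $\Bbb S_{x'}(\Sigma)$ intersect. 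Using
$$\Bbb S_{x}(\Sigma)\cap\Bbb S_{x'}(\Sigma)=\left(\mathrm{H}_{x}(\Sigma)\cap\mathrm{H}_{x'}(\Sigma)\right)\cap\Bbb S^{n},$$
this amounts to proving that the affine subspace $\mathrm{H}_{x}(\Sigma)\cap\mathrm{H}_{x'}(\Sigma)$ meets $\Bbb S^{n}$.

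I would establish this through the envelope of the tangent planes. Writing $\mathrm{H}_{x(s)}(\Sigma)=\{y:(y-x(s))\cdot\psi(s)=0\}$ and differentiating in $s$ (using $x'(s)\cdot\psi(s)=0$, as $x'$ is tangent and $\psi$ normal), the characteristic of this family is
$$\Theta(s)=\{y:(y-x(s))\cdot\psi(s)=0,\ (y-x(s))\cdot\psi'(s)=0\},$$
which manifestly contains $y=x(s)$. The regularity of $\mathcal V$ enters precisely here: the condition $\gamma_{1}'\gamma_{2}''-\gamma_{1}''\gamma_{2}'\neq0$ is nonvanishing curvature of the profile, which forces $\psi'(s)\neq0$; hence $(y-x)\cdot\psi'(s)=0$ is independent of $(y-x)\cdot\psi(s)=0$, so $\Theta(s)$ is a genuine $(n-1)$-plane, while for $s'$ close to $s$ the plane $\mathrm{H}_{x(s')}(\Sigma)$ crosses $\mathrm{H}_{x(s)}(\Sigma)$ transversally with $\mathrm{H}_{x(s)}(\Sigma)\cap\mathrm{H}_{x(s')}(\Sigma)$ converging to $\Theta(s)$. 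Now comes the only place the hypothesis $\Sigma\subset\Bbb S^{n}$ is used: since $x(s)$ lies in the open unit ball $\Bbb B(0,1)$ and $x(s)\in\Theta(s)$, the plane $\Theta(s)$ meets the open ball and therefore crosses $\Bbb S^{n}$ transversally; by stability of transversal crossings the nearby plane $\mathrm{H}_{x(s)}(\Sigma)\cap\mathrm{H}_{x(s')}(\Sigma)$ also meets $\Bbb S^{n}$. Thus $\Bbb S_{x(s)}(\Sigma)$ and $\Bbb S_{x(s')}(\Sigma)$ intersect, giving $|\xi'(s)|>|r'(s)|$ and hence the (even strict) space like property.

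I expect the main obstacle to be the careful bookkeeping around the degenerate loci rather than the central idea. One must fix the sign of the denominator $\rho(s)-\psi_{n+1}(s)$, which is nonzero and hence of constant sign on the connected set $\mathcal V$ precisely because $\mathcal V\subset\Sigma\setminus\Sigma'$, and one must treat the two poles $\gamma_{1}(s)=0$ on the axis and any point with $\psi^{\ast}=0$ (where $\xi=0$) separately, arguing by continuity that the closed inequality $\xi'^{2}\geq r'^{2}$ persists there and that $\Phi_{\Sigma}|_{\mathcal V}$ is a smooth immersion so that the term "space like surface" is meaningful; simple connectedness of $\mathcal V$ guarantees a single profile arc and a globally consistent choice of $\psi$. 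As an independent check one can bypass the geometry and verify $\xi'^{2}\geq r'^{2}$ directly: the clean identity $\xi^{2}-r^{2}=(\rho+\psi_{n+1})/(\rho-\psi_{n+1})$, together with $|\psi^{\ast}|^{2}=1-\psi_{n+1}^{2}$, streamlines the algebra and reduces the inequality, after differentiation, to the nonvanishing of $\gamma_{1}'\gamma_{2}''-\gamma_{1}''\gamma_{2}'$, in agreement with the envelope argument.
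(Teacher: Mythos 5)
Your proposal is correct in substance but reaches the key inequality by a genuinely different route from the paper. Both arguments share the same first reduction: axial symmetry makes $\Phi_{\Sigma}(\mathcal{V})$ a surface of revolution, so everything comes down to a profile curve and a single scalar inequality (your $\xi'^{2}\geq r'^{2}$; the paper's $h_{1}^{2}-h_{2}^{2}\geq0$). The paper then proceeds by brute force: it parameterizes $\mathcal{V}$ by $\gamma$, writes $\Phi_{\Sigma}\circ\gamma$ explicitly, differentiates to get the tangent direction $\nu(t_{0})=K(t_{0})(1-\gamma_{2},\,h_{2}/\sqrt{\cdots})$, uses regularity to see $K\neq0$ (so the image is immersed), and verifies the closed-form identity $h_{1}^{2}-h_{2}^{2}=(1-\gamma_{1}^{2}-\gamma_{2}^{2})(\gamma_{1}\gamma_{2}'-\gamma_{1}'\gamma_{2}+\gamma_{1}')^{2}\geq0$. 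Your envelope argument replaces that identity with geometry: nearby tangent hyperplanes meet in a codimension-two plane converging to the characteristic $\Theta(s)$, which contains the point of tangency $x(s)$ lying strictly inside the ball, hence meets $\Bbb S^{n}$; so nearby subspheres (and, via $\Lambda$, their image spheres) intersect, which forces the radius to vary no faster than the center. Note that both proofs invoke $\Sigma\subset\Bbb B(0,1)$ at exactly the analogous moment --- for you it is $|x(s)|<1$ putting $\Theta(s)$ inside the ball, for the paper it is the factor $1-\gamma_{1}^{2}-\gamma_{2}^{2}>0$ --- and both use regularity to make the tangency data nondegenerate ($\psi'\neq0$ for you, $K\neq0$ for the paper). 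What your approach buys is a conceptual explanation of \emph{why} the image is space like (it is the intersection pattern of infinitesimally close subspheres); what the paper's computation buys is that the immersion property and the inequality drop out of one explicit formula, whereas you must still separately verify that $(\xi',r')$ never vanishes (you flag this but do not carry it out; the paper's $K(t_{0})(1-\gamma_{2}(t_{0}))\neq0$ is precisely that verification). Two minor cautions: your limiting argument only yields the non-strict inequality $|r'|\leq|\xi'|$ (which is all Definition 2.6 requires), so the parenthetical claim of strictness is not actually delivered by the envelope argument, even though it happens to be true by the paper's identity; and your unsigned $\xi=|\psi^{\ast}|/|\rho-\psi_{n+1}|$ is non-differentiable where the center crosses the origin (the pole of $\mathcal{U}$), so the signed version should be used there, as you anticipate.
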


Observe that if $\Sigma$ is a hypersurface in $\mathbf{\Sigma}$, i.e., axially symmetric and has a simply connected interior, then in dimension greater than two the connected components of $\Sigma\setminus\Sigma'$ which are simply connected are exactly the upper and lower components (i.e., the connected components which intersect the coordinate axis $x_{n + 1}$). Hence, as a corollary we obtain that if the upper connected component $\mathcal{U}$ of $\Sigma\setminus\Sigma'$ is also regular then its image under $\Phi_{\Sigma}$ will be space like. Intuitively, it is clear that the other connected components cannot be mapped by $\Phi_{\Sigma}$ to a space like surface since their boundary is not connected while one can think of a space like surface of having a connected boundary at infinity.\\

Let us explain intuitively why the regularity condition is needed in Theorem 3.6 and for simplicity let us restrict our discussion only to the two dimensional plane.

First, we say that a path function $\gamma:I\rightarrow\Bbb R^{2}$ is well-ordered of degree $k\geq1$ if $\gamma$ is $k$ times continuously differentiable and $\gamma^{(i)}(t)\neq\bar{0}, t\in I, i = 1,...,k$. Secondly, observe that if the path function $\gamma$ is well-ordered of degree $k\geq1$ then in particular we have that $\gamma'(t)\neq\bar{0}, t\in I$, and thus we can all ways assume that $\gamma$ is parameterized in unit speed, i.e., the condition $|\gamma'(t)| = 1$ is satisfied for every $t\in I$.

Returning to our discussion of the regularity condition in Theorem 3.6, observe that since the definition of the map $\Phi_{\Sigma}$ involves the concept of the tangent plane, which itself involves the concept of a derivative, it follows that if $\gamma$ is well ordered of degree $k$ then $\Phi_{\Sigma}\circ\gamma$ will be well ordered of degree $k - 1$. That is, the map $\Phi_{\Sigma}$ reduces the degree of well-orderedness of $\gamma$ by one. Hence, if $\Sigma^{\ast}$ is a smooth subcurve of $\Sigma$, which is parameterized by $\gamma$, and we only know that $\gamma$ is well-ordered of degree one then the derivative of $\Phi_{\Sigma}\circ\gamma$ can vanish at some point which will imply in particular that $\Phi_{\Sigma}(\Sigma^{\ast})$ might not have a tangent line at that point and hence it will not be space like. However, this is where the regularity condition comes in order to guarantee the smoothness of $\Phi_{\Sigma}(\Sigma^{\ast})$. Indeed, if $\gamma$ is parameterized in unit speed then the condition $\gamma_{1}'(t)\gamma_{2}''(t) - \gamma_{1}''(t)\gamma_{2}'(t)\neq0$ is equivalent to $\gamma''(t)\neq\bar{0}$, i.e., $\gamma$ is well ordered of degree $2$ which will imply that $\Phi_{\Sigma}\circ\gamma$ is a well-ordered path function of degree one and thus $\Phi_{\Sigma}(\Sigma^{\ast})$ is smooth (i.e., has a tangent line at each point). In other words, when restricted to regular curves the map $\Phi_{\Sigma}$ is a diffeomorphism, i.e., the image of a regular curve under $\Phi_{\Sigma}$ contains no "spikes".

On a geometrical level the regularity condition on the subcurve $\Sigma^{\ast}$ implies that it is locally convex at every point. Indeed, in the two dimensional case local convexity of a curve $\Sigma^{\ast}$ at a point $p$ is equivalent to the condition that there exists a neighborhood of $p$ such that no two points in $\Sigma^{\ast}$ in this neighborhood have parallel tangent lines (of course these conditions are not equivalent in higher dimensions). The regularity condition guarantees that this last condition is satisfied. Indeed, let $\gamma$ be a parametrization of $\Sigma^{\ast}$ satisfying the condition $\gamma_{1}'(t)\gamma_{2}''(t) - \gamma_{1}''(t)\gamma_{2}'(t)\neq0$ and for the point $p$ assume that $\gamma(t_{0}) = p$. Since at least $\gamma_{1}'(t_{0})$ or $\gamma_{2}'(t_{0})$ are different from zero let us assume with out loss of generality that $\gamma_{1}'(t_{0})\neq 0$ and the last inequality is thus true in a neighborhood of $t_{0}$. Hence, for the function $s(t) = \gamma_{2}'(t) / \gamma_{1}'(t)$ the regularity condition implies that its derivative is different from zero in a neighborhood of $t_{0}$ which implies that $s(t)$ is injective in a neighborhood of $t_{0}$. But since $s(t)$ is the slope of the tangent lines of points in a neighborhood of $p$ this equivalently means that no two tangent lines in this neighborhood can have the same slope, i.e., they are not parallel.

Finally, in our case the regularity condition implies local convexity for regular connected components of $\Sigma\setminus\Sigma'$ in higher dimensions since these components are obtained as a surface of revolution of a regular curve.\\

For the proof of Theorem 3.6 we need to find the explicit form of $\Phi_{\Sigma}$ in case where the hypersurface $\Sigma$, given as in Theorem 3.6, is the set of solutions of $F(x) = 0$. In this case, if $\mathrm{H}_{x}(\Sigma) = \{y\in\Bbb R^{n + 1}: y\cdot\psi = \rho\}$ is the tangent plane to $\Sigma$ at $x\in\Sigma$ then we have that
\begin{equation}\hskip-1.5cm \psi = \frac{\nabla F(x)}{|(\nabla F)(x)|}, \rho = \frac{x\cdot \nabla F(x)}{|(\nabla F)(x)|} \textrm{ or } \psi = -\frac{\nabla F(x)}{|(\nabla F)(x)|}, \rho = - \frac{x\cdot \nabla F(x)}{|(\nabla F)(x)|}\end{equation}
where the sign is chosen so that $\rho \geq0$. In either case it can be easily seen that this does not change the right hand side of equation (3.1) in the definition of $\Psi$. Also, $\mathrm{H}_{x}(\Sigma)$ passes through $e_{n + 1}$ if and only if $x\cdot\nabla F(x) - F_{x_{n + 1}}(x) = 0$. Hence, using formula (3.1) for $\Psi$ and equation (3.2) we obtain the following corollary.

\begin{cor}

If a closed, smooth hypersurface $\Sigma$ in $\Bbb R^{n + 1}$, which is contained inside $\Bbb S^{n}$, is given by the equation $\Sigma: F(x) = 0$, then the surface map $\Phi_{\Sigma}$ is given by
$$\hskip-1.75cm\Phi_{\Sigma}:\Sigma\setminus\Sigma'\rightarrow\Bbb R^{n + 1} \textrm{ where } \Sigma' = \{x\in\Sigma:x\cdot\nabla F(x) - F_{x_{n + 1}}(x) = 0\},$$
\vskip-0.2cm
$$\hskip-12.9cm\Phi_{\Sigma}(x)$$
\begin{equation}\hskip0.6cm = \left(\frac{F_{x_{1}}(x)}{x\cdot\nabla F(x) - F_{x_{n + 1}}(x)},...,\frac{F_{x_{n}}(x)}{x\cdot\nabla F(x) - F_{x_{n + 1}}(x)},\frac{\sqrt{|\nabla F(x)|^{2} - (x\cdot\nabla F(x))^{2}}}{\left|x\cdot\nabla F(x) - F_{x_{n + 1}}(x)\right|}\right).\end{equation}

\end{cor}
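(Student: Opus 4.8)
The plan is to obtain (3.3) by a direct substitution of the tangent-plane data (3.2) into the defining formula (3.1) for $\Psi$, together with an elementary identification of the singularity set $\Sigma'$. No deep argument is required; the content is purely computational, so the only care needed is in tracking the normalization by $|\nabla F(x)|$, the sign ambiguity in (3.2), and the absolute value appearing in the last coordinate.

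First I would record the geometry underlying (3.2): since $\Sigma$ is the zero set of $F$, the gradient $\nabla F(x)$ is normal to $\Sigma$ at $x$, so the tangent plane $\mathrm{H}_{x}(\Sigma)$ is $\{y\in\Bbb R^{n+1}:(y-x)\cdot\nabla F(x)=0\}$, i.e. $y\cdot\nabla F(x)=x\cdot\nabla F(x)$. Writing this in the normalized form $y\cdot\psi=\rho$ yields exactly $\psi=\nabla F(x)/|\nabla F(x)|$ and $\rho=x\cdot\nabla F(x)/|\nabla F(x)|$, which is (3.2). In particular $\psi^{\ast}=(F_{x_{1}}(x),\dots,F_{x_{n}}(x))/|\nabla F(x)|$ and $\psi_{n+1}=F_{x_{n+1}}(x)/|\nabla F(x)|$, so that
$$\rho-\psi_{n+1}=\frac{x\cdot\nabla F(x)-F_{x_{n+1}}(x)}{|\nabla F(x)|}.$$

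Then I would substitute into (3.1). For the first $n$ coordinates the common factor $|\nabla F(x)|^{-1}$ in the numerator $\psi^{\ast}$ and in $\rho-\psi_{n+1}$ cancels, giving $F_{x_{i}}(x)/(x\cdot\nabla F(x)-F_{x_{n+1}}(x))$ for $i=1,\dots,n$. For the last coordinate I would compute
$$1-\rho^{2}=1-\frac{(x\cdot\nabla F(x))^{2}}{|\nabla F(x)|^{2}}=\frac{|\nabla F(x)|^{2}-(x\cdot\nabla F(x))^{2}}{|\nabla F(x)|^{2}},$$
so that $\sqrt{1-\rho^{2}}=\sqrt{|\nabla F(x)|^{2}-(x\cdot\nabla F(x))^{2}}/|\nabla F(x)|$; dividing by $|\rho-\psi_{n+1}|=|x\cdot\nabla F(x)-F_{x_{n+1}}(x)|/|\nabla F(x)|$ again cancels the factor $|\nabla F(x)|^{-1}$ and produces the stated last coordinate of (3.3).

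Finally I would dispose of two bookkeeping points. The sign ambiguity in (3.2) replaces $(\psi,\rho)$ by $(-\psi,-\rho)$, hence $\psi^{\ast}\mapsto-\psi^{\ast}$ and $\rho-\psi_{n+1}\mapsto-(\rho-\psi_{n+1})$, so every first-coordinate quotient is unchanged, while $\rho^{2}$ and $|\rho-\psi_{n+1}|$ are manifestly sign-independent; thus $\Phi_{\Sigma}(x)$ is well defined, exactly as already noted after (3.2). It remains to identify $\Sigma'$: the tangent plane $y\cdot\nabla F(x)=x\cdot\nabla F(x)$ passes through $e_{n+1}$ precisely when $e_{n+1}\cdot\nabla F(x)=x\cdot\nabla F(x)$, i.e. $F_{x_{n+1}}(x)=x\cdot\nabla F(x)$, which is the vanishing of the common denominator $x\cdot\nabla F(x)-F_{x_{n+1}}(x)$; this both yields the stated description $\Sigma'=\{x\in\Sigma:x\cdot\nabla F(x)-F_{x_{n+1}}(x)=0\}$ and confirms that (3.3) is defined exactly on $\Sigma\setminus\Sigma'$. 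Since everything reduces to these cancellations, there is no genuine obstacle; the one place to stay alert is handling the denominators consistently and ensuring the radius coordinate is nonnegative, which is guaranteed by the absolute value rather than by any conceptual subtlety.
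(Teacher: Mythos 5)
Your proposal is correct and follows the same route the paper takes: the paper's own justification (the paragraph preceding the corollary) is precisely to substitute the tangent-plane data (3.2) into the formula (3.1) for $\Psi$, observe that the sign ambiguity cancels, and identify $\Sigma'$ with the vanishing of the common denominator $x\cdot\nabla F(x)-F_{x_{n+1}}(x)$. You simply spell out the cancellations of the $|\nabla F(x)|$ factors more explicitly than the paper does.
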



\begin{figure}[t]

\caption{\small{On the two dimensional plane let us consider the hypersurface $\Sigma$ which is parameterized by the path function $\gamma:[-\pi, \pi)\rightarrow\Bbb R^{2}, \gamma(\theta) = r(\theta)e^{i\theta}$, where $r(\theta) = 0.9 \cdot (0.5 + 0.2\cdot\sin(3\theta + 3.1))$. The set $\Sigma\setminus\Sigma'$, where $\Sigma'$ is the singularity set of $\Sigma$, consists of six connected components (see image on the left) and thus $\Phi_{\Sigma}(\Sigma\setminus\Sigma')$ consists also of six components (see the image on the right). Observe that in the connected components of $\Phi_{\Sigma}(\Sigma\setminus\Sigma')$ only the red part is a space like curve. This is because it is the only connected component whose inverse image under $\Phi_{\Sigma}$ is a regular subcurve. The other connected components are obtained as the image of $\Phi_{\Sigma}\circ\gamma$ on a domain for which there exists a point $\theta$ such that $\gamma_{1}'(\theta)\gamma_{2}''(\theta) - \gamma_{1}''(\theta)\gamma_{2}'(\theta) = 0$.}}

\vskip0.5cm

\hskip-1cm\begin{tikzpicture}

\begin{axis}

\addplot [blue, samples = 50, domain = -0.2 : 0.6]
(
    {0.9 * (0.5 + 0.2 * sin(deg(3 * x + 3.1))) * cos(deg(x))},
    {0.9 * (0.5 + 0.2 * sin(deg(3 * x + 3.1))) * sin(deg(x))}
);

\addplot [olive, samples = 50, domain = 0.6 : 1.1]
(
    {0.9 * (0.5 + 0.2 * sin(deg(3 * x + 3.1))) * cos(deg(x))},
    {0.9 * (0.5 + 0.2 * sin(deg(3 * x + 3.1))) * sin(deg(x))}
);

\addplot [magenta, samples = 50, domain = -0.2 : 0.6]
(
    {- 0.9 * (0.5 + 0.2 * sin(deg(3 * x + 3.1))) * cos(deg(x))},
    {0.9 * (0.5 + 0.2 * sin(deg(3 * x + 3.1))) * sin(deg(x))}
);

\addplot [cyan, samples = 50, domain = 0.6 : 1.1]
(
    {- 0.9 * (0.5 + 0.2 * sin(deg(3 * x + 3.1))) * cos(deg(x))},
    {0.9 * (0.5 + 0.2 * sin(deg(3 * x + 3.1))) * sin(deg(x))}
);

\addplot [green, samples = 50, domain = -1.6 : -0.2]
(
    {0.9 * (0.5 + 0.2 * sin(deg(3 * x + 3.1))) * cos(deg(x))},
    {0.9 * (0.5 + 0.2 * sin(deg(3 * x + 3.1))) * sin(deg(x))}
);

\addplot [green, samples = 50, domain = -1.6 : -0.2]
(
    {- 0.9 * (0.5 + 0.2 * sin(deg(3 * x + 3.1))) * cos(deg(x))},
    {0.9 * (0.5 + 0.2 * sin(deg(3 * x + 3.1))) * sin(deg(x))}
);

\addplot [red, samples = 50, domain = 1.1 : 2.06]
(
    {- 0.9 * (0.5 + 0.2 * sin(deg(3 * x + 3.1))) * cos(deg(x)) - 0.005},
    {0.9 * (0.5 + 0.2 * sin(deg(3 * x + 3.1))) * sin(deg(x))}
);

\addplot [black, samples = 50, domain = -pi / 4 - 0.125: 7 * pi / 4 - 0.125]
(
    {cos(deg(x))}, {sin(deg(x))}
);

\addplot [black, samples = 50, domain = -0.22 : 0]({x},{1 + 2.5 * x});

\addplot [black, samples = 50, domain = -0.22 : 0]({-x},{1 + 2.5 * x});

\addplot [black, samples = 50, domain = -0.55 : 0]({x},{1 + 2 * x});

\addplot [black, samples = 50, domain = -0.55 : 0]({-x},{1 + 2 * x});

\addplot [black, samples = 50, domain = -0.22 : 0]({x},{1 + 3.8 * x});

\addplot [black, samples = 50, domain = -0.22 : 0]({-x},{1 + 3.8 * x});

\end{axis}

\begin{axis}

\addplot [blue, samples = 50, domain = -0.2 : 0.6]
(
    {0.9 * (0.5 + 0.2 * sin(deg(3 * x + 3.1))) * cos(deg(x))},
    {0.9 * (0.5 + 0.2 * sin(deg(3 * x + 3.1))) * sin(deg(x))}
);

\addplot [olive, samples = 50, domain = 0.6 : 1.1]
(
    {0.9 * (0.5 + 0.2 * sin(deg(3 * x + 3.1))) * cos(deg(x))},
    {0.9 * (0.5 + 0.2 * sin(deg(3 * x + 3.1))) * sin(deg(x))}
);

\addplot [magenta, samples = 50, domain = -0.2 : 0.6]
(
    {- 0.9 * (0.5 + 0.2 * sin(deg(3 * x + 3.1))) * cos(deg(x))},
    {0.9 * (0.5 + 0.2 * sin(deg(3 * x + 3.1))) * sin(deg(x))}
);

\addplot [cyan, samples = 50, domain = 0.6 : 1.1]
(
    {- 0.9 * (0.5 + 0.2 * sin(deg(3 * x + 3.1))) * cos(deg(x))},
    {0.9 * (0.5 + 0.2 * sin(deg(3 * x + 3.1))) * sin(deg(x))}
);

\addplot [green, samples = 50, domain = -1.6 : -0.2]
(
    {0.9 * (0.5 + 0.2 * sin(deg(3 * x + 3.1))) * cos(deg(x))},
    {0.9 * (0.5 + 0.2 * sin(deg(3 * x + 3.1))) * sin(deg(x))}
);

\addplot [green, samples = 50, domain = -1.6 : -0.2]
(
    {- 0.9 * (0.5 + 0.2 * sin(deg(3 * x + 3.1))) * cos(deg(x))},
    {0.9 * (0.5 + 0.2 * sin(deg(3 * x + 3.1))) * sin(deg(x))}
);

\addplot [red, samples = 50, domain = 1.1 : 2.06]
(
    {- 0.9 * (0.5 + 0.2 * sin(deg(3 * x + 3.1))) * cos(deg(x)) - 0.005},
    {0.9 * (0.5 + 0.2 * sin(deg(3 * x + 3.1))) * sin(deg(x))}
);

\addplot [black, samples = 50, domain = -pi / 4 - 0.125: 7 * pi / 4 - 0.125]
(
    {cos(deg(x))}, {sin(deg(x))}
);

\addplot [black, samples = 50, domain = -0.22 : 0]({x},{1 + 2.5 * x});

\addplot [black, samples = 50, domain = -0.22 : 0]({-x},{1 + 2.5 * x});

\addplot [black, samples = 50, domain = -0.55 : 0]({x},{1 + 2 * x});

\addplot [black, samples = 50, domain = -0.55 : 0]({-x},{1 + 2 * x});

\addplot [black, samples = 50, domain = -0.22 : 0]({x},{1 + 3.85 * x});

\addplot [black, samples = 50, domain = -0.22 : 0]({-x},{1 + 3.85 * x});

\end{axis}

\end{tikzpicture}\begin{tikzpicture}

\hskip1cm\includegraphics[width = 9cm, height = 6cm]{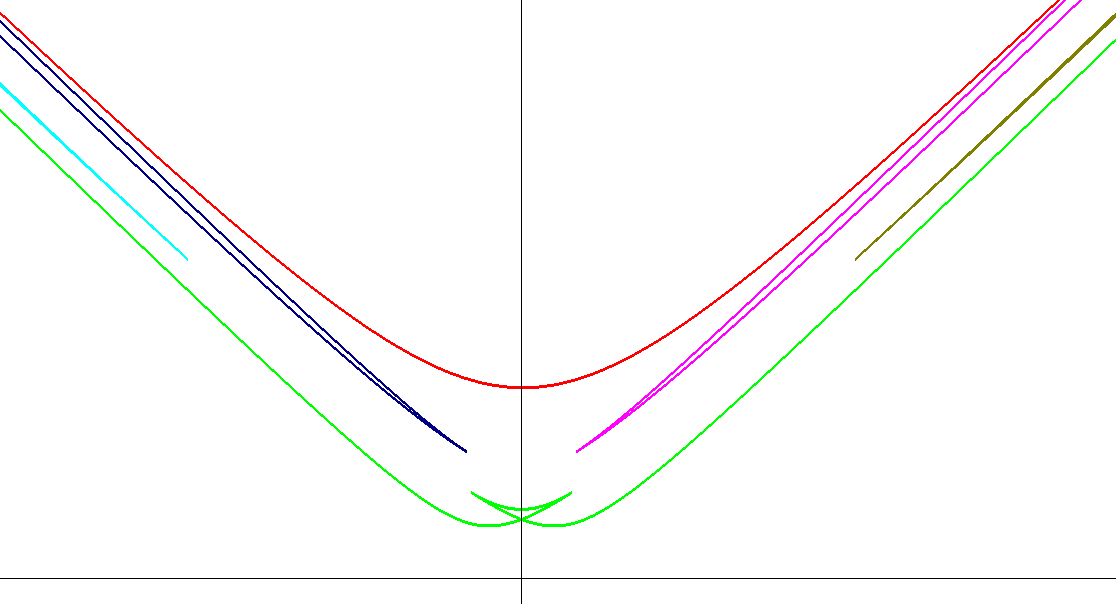}

\end{tikzpicture}

\end{figure}


\begin{example}

Let $\Sigma$ be the hypersphere
\vskip-0.2cm
$$\hskip-8.35cm\Sigma: F(x) = |x - \lambda\omega|^{2} - r^{2} = 0$$
in $\Bbb R^{n + 1}$ with radius $r$ and center at $\lambda\omega$ where $\omega\in\Bbb S^{n}$ and where $0\leq r < 1 - \lambda$ (the last condition guarantees that $\Sigma$ is contained inside $\Bbb S^{n}$). From Corollary 3.7 we have
\newpage
$$\hskip0.75cm\Phi_{\Sigma}:\{x\in\Bbb R^{n + 1}:|x - \lambda\omega|^{2} - r^{2} = 0, r^{2} - \lambda^{2} + \lambda(\omega\cdot x) - x_{n + 1} + \lambda\omega_{n + 1}\neq0\}\rightarrow\Bbb R^{n + 1}$$
$$\hskip-12.65cm\Phi_{\Sigma}(x)$$
$$\hskip0.3cm = \left(\frac{x_{1} - \lambda\omega_{1}}{r^{2} - \lambda^{2} + \lambda(\omega\cdot x) - x_{n + 1} + \lambda\omega_{n + 1}},...,\frac{x_{n} - \lambda\omega_{n}}{r^{2} - \lambda^{2} + \lambda(\omega\cdot x) - x_{n + 1} + \lambda\omega_{n + 1}},\right.$$
$$\hskip-5.3cm\left.\frac{\sqrt{r^{2} - (r^{2} - \lambda^{2} + \lambda(x\cdot\omega))^{2}}}{|r^{2} - \lambda^{2} + \lambda(\omega\cdot x) - x_{n + 1} + \lambda\omega_{n + 1}|}\right).$$
If $\omega^{\ast} = (\omega_{1},...,\omega_{n})$ denotes the orthogonal projection of $\omega$ into $\Bbb R^{n}$ then by standard considerations one can prove that the image $\Phi_{\Sigma}(\Sigma\setminus\Sigma')$ of $\Sigma\setminus\Sigma'$ under $\Phi_{\Sigma}$ has the algebraic representation as the union of the following two "upper" elliptic hyperboloids
\vskip-0.2cm
$$ A(y_{n + 1} \pm B)^{2} - C\left(\frac{\omega^{\ast}}{\sqrt{1 - \omega_{n + 1}^{2}}}\cdot \bar{y}\right)^{2} - D\left[|\bar{y}|^{2} - \left(\frac{\omega^{\ast}}{\sqrt{1 - \omega_{n + 1}^{2}}}\cdot\bar{y}\right)^{2}\right] = 1, y_{n + 1} > 0$$
where $\bar{y} = (y_{1},...,y_{n})$ and if we denote
$$\hskip-1.5cm Q = \sqrt{\frac{1 - r + \lambda\omega_{n + 1}}{1 + r - \lambda\omega_{n + 1}}}, P = \sqrt{\frac{1 + r + \lambda\omega_{n + 1}}{1 - r - \lambda\omega_{n + 1}}}, L = r + \lambda\sqrt{1 - \omega_{n + 1}^{2}}$$
then $A = 4 / (Q + P)^{2}, B = (Q - P) / 2$ and
$$\hskip-2.35cm C = \left(\sqrt{1 - L^{2}} - BL\right)^{2}A - L^{2}, D = \left(\sqrt{1 - r^{2}} - Br\right)^{2}A - r^{2}.$$
In particular, in case where $\lambda = 0$ (i.e., $\Sigma$ is a hypersphere with center at the origin and radius $r$) we have that
$$\hskip-1.5cm \Phi_{\Sigma}(\Sigma\setminus\Sigma'): \left(y_{n + 1} \pm \frac{r}{\sqrt{1 - r^{2}}}\right)^{2} - y_{1}^{2} - ... - y_{n}^{2} = \frac{1}{1 - r^{2}}, y_{n + 1} > 0.$$

Observe that when $\lambda = 0$ then $\Sigma$ is axially symmetric and, as according to Theorem 3.6, the images under $\Phi_{\Sigma}$ of the two connected components of $\Sigma\setminus\Sigma'$, which are regular and simply connected, are indeed space like.

\end{example}

\section{Proofs of the Main Results}

We begin with the proof of Theorem 3.6 since, as discussed above, Theorem 3.1 is based on its main result. The proof of Theorem 3.6 is based on the following lemma.

\begin{lem}

Let $\Sigma$ be a closed, smooth and connected curve in $\Bbb R^{2}$ which is contained inside the unit circle and let $\Sigma'$ be its set of singularity. Then, if $\mathcal{V}$ is a connected component of $\Sigma\setminus\Sigma'$ which is also regular then its image $\mathcal{V}' = \Phi_{\Sigma}(\mathcal{V})$ under $\Phi_{\Sigma}$ is a space like curve.

\end{lem}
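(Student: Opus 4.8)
The plan is to work entirely in the two-dimensional setting and reduce the space-like condition to a concrete computation about the parametrized image curve. First I would parametrize the regular component $\mathcal{V}$ by a unit-speed path $\gamma:I\rightarrow\Bbb R^{2}$ inside the unit disk, so that by Definition 2.3 the regularity hypothesis becomes $\gamma_{1}'\gamma_{2}''-\gamma_{1}''\gamma_{2}'\neq 0$, equivalently (in unit speed) $\gamma''(t)\neq\bar 0$ for all $t\in I$; the curvature never vanishes. For each $t$ the tangent line to $\Sigma$ at $\gamma(t)$ has unit normal $\psi(t)=(-\gamma_{2}'(t),\gamma_{1}'(t))$ (up to sign, chosen so the signed distance $\rho(t)=\gamma(t)\cdot\psi(t)\geq 0$), and I would feed $(\psi(t),\rho(t))$ into the map $\Psi$ of (3.1), so that $\mathcal{V}'=\Phi_{\Sigma}\circ\gamma$ is an explicit curve $t\mapsto(X(t),T(t))\in\Bbb R\times\Bbb R^{+}$ with $X(t)=\psi^{\ast}(t)/(\rho(t)-\psi_{2}(t))$ and $T(t)=\sqrt{1-\rho^{2}(t)}/|\rho(t)-\psi_{2}(t)|$, here $n=1$ so $\psi^{\ast}=\psi_{1}$. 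The point $\gamma(t)\notin\Sigma'$ guarantees $\rho(t)\neq\psi_{2}(t)$, so the denominators never vanish and $\mathcal{V}'$ is well defined.

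The core of the argument is to show two things: that $\mathcal{V}'$ is smooth (has a nonvanishing tangent vector everywhere), and that along $\mathcal{V}'$ the normal lies inside the light cone, i.e.\ the normal $\mathbf N$ satisfies $\mathbf N_{2}^{2}\geq \mathbf N_{1}^{2}$ as required by Definition 2.8. For the smoothness claim, I would differentiate $(X(t),T(t))$ and show $(X'(t),T'(t))\neq\bar 0$; this is exactly the heuristic spelled out in the text that $\Phi_{\Sigma}$ drops the degree of well-orderedness by one, so $\gamma$ well-ordered of degree $2$ (which is the unit-speed form of regularity) forces $\Phi_{\Sigma}\circ\gamma$ to be well-ordered of degree $1$. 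Concretely I expect $X'$ and $T'$ to carry a common factor involving the curvature $\gamma_{1}'\gamma_{2}''-\gamma_{1}''\gamma_{2}'$ in the numerator, and the nonvanishing of this factor is precisely what the regularity hypothesis supplies; the denominators $(\rho-\psi_{2})$ raised to appropriate powers stay nonzero off $\Sigma'$. For the space-like condition, since the tangent vector to $\mathcal{V}'$ is $(X',T')$, an outward normal is $\mathbf N=(-T',X')$ up to normalization, so the inequality $\mathbf N_{2}^{2}\geq\mathbf N_{1}^{2}$ becomes $(X'(t))^{2}\geq (T'(t))^{2}$, i.e.\ $|T'(t)|\leq|X'(t)|$. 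Thus the entire geometric statement reduces to the single scalar inequality that the center of the projected circle moves at least as fast as its radius.

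The plan for this last inequality is to compute $X'$ and $T'$ explicitly and compare. Using $\rho=\gamma\cdot\psi$ and $\rho'=\gamma\cdot\psi'$ (the $\gamma'\cdot\psi$ term drops since $\psi\perp\gamma'$), together with $\psi'=\kappa\gamma'$ where $\kappa=\gamma_{1}'\gamma_{2}''-\gamma_{1}''\gamma_{2}'$ is the signed curvature in unit speed, I would express both derivatives in terms of $\gamma$, $\psi$, and $\kappa$, cancel the common curvature factor, and verify $|T'|\leq|X'|$ directly. Geometrically $|T'|=|X'|$ should correspond exactly to the projected circle passing through the image of the north pole, i.e.\ to the excluded set $\Sigma'$, which is why the strict inequality is expected to hold on all of $\mathcal{V}$ and the boundary behavior $T'\to\pm X'$ matches the surface escaping to infinity like a cone at $\partial\mathcal{V}$. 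I anticipate the main obstacle to be the algebraic bookkeeping in establishing $|T'|\leq|X'|$ cleanly: one must keep careful track of the sign chosen for $\psi$ so that $\rho\geq0$, handle the absolute value in $T$ across any sign change of $\rho-\psi_{2}$, and confirm that the quantity under the square root $1-\rho^{2}$ stays positive (which holds because $\Sigma$ lies strictly inside the unit circle, so $\rho<1$). Once the scalar inequality is verified, Definition 2.8 gives that $\mathcal{V}'$ is space like, completing the proof.
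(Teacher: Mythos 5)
Your proposal follows essentially the same route as the paper: parametrize the regular component by a path $\gamma$, push it through $\Phi_{\Sigma}$ to get an explicit curve, use the regularity condition $\gamma_{1}'\gamma_{2}''-\gamma_{1}''\gamma_{2}'\neq0$ to show the image has a nonvanishing tangent vector (the curvature factor appears exactly as you predict), and reduce the space-like property to the scalar inequality between the two components of that tangent vector. The one step you leave unverified is the decisive identity, which in the paper reads $h_{1}^{2}(t)-h_{2}^{2}(t)=(1-\gamma_{1}^{2}(t)-\gamma_{2}^{2}(t))(\gamma_{1}(t)\gamma_{2}'(t)-\gamma_{1}'(t)\gamma_{2}(t)+\gamma_{1}'(t))^{2}\geq0$; note that its equality locus is governed by $|\gamma(t)|=1$ (the curve touching the unit circle), not by the north-pole condition as your geometric heuristic suggests, though this does not affect the argument since only the non-strict inequality is needed for Definition 2.6.
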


\begin{proof}

First observe that since $\mathcal{V}$ is connected and $\Phi_{\Sigma}$ is continuous on $\Sigma\setminus\Sigma'$ (and in particular on $\mathcal{V}$) it follows that $\mathcal{V}' = \Phi_{\Sigma}(\mathcal{V})$ is also connected. Secondly, since the boundary points of $\mathcal{V}$ are mapped to infinity by $\Phi_{\Sigma}$ it follows that $\mathcal{V}'$ contains no boundary points. Hence, all is left is to prove that $\mathcal{V}'$ is smooth, i.e., has a tangent line at each point, and that it is space like, i.e., the normal to each tangent line is contained inside the set $x_{2}^{2} - x_{1}^{2}\geq0$.\\

Let us denote by $F = 0$ the equation which defines the curve $\Sigma$. Since $\mathcal{V}$ is regular it follows that it can be parameterized by a twice continuously differentiable path function $\gamma:I\rightarrow\Bbb R^{2}$ satisfying $\gamma_{1}'(t)\gamma_{2}''(t) - \gamma_{1}''(t)\gamma_{2}'(t)\neq0, t\in I$. Since $\mathcal{V}$ is contained in $\Sigma$ it follows that $F(\gamma_{1}(t), \gamma_{2}(t)) = 0, t\in I$ and hence by taking the derivative with respect to $t$ we obtain
\vskip-0.2cm
\begin{equation}\hskip-4.5cm\partial_{1}F(\gamma_{1}(t), \gamma_{2}(t))\gamma_{1}'(t) + \partial_{2}F(\gamma_{1}(t), \gamma_{2}(t))\gamma_{2}'(t) = 0, t\in I.\end{equation}

Now, let $p$ be a point in $\mathcal{V}'$ and our aim is to show that $\mathcal{V}'$ is smooth and space like in $p$ (i.e., has a tangent line in $p$ whose normal is contained in $x_{2}^{2} - x_{1}^{2}\geq0$). Let $t_{0}$ be a point in $I$ such that $\Phi_{\Sigma}(\gamma(t_{0})) = p$. Since $\Sigma$ is smooth it follows that either $\partial_{1}F(\gamma_{1}(t_{0}), \gamma_{2}(t_{0}))$ or $\partial_{2}F(\gamma_{1}(t_{0}), \gamma_{2}(t_{0}))$ are different from zero and let us assume, without loss of generality, that $\partial_{1}F(\gamma_{1}(t_{0}), \gamma_{2}(t_{0}))\neq 0$. By taking into account equation (4.1) this implies that $\gamma_{2}'(t_{0})\neq0$ and thus in a neighborhood of $t_{0}$ we have
\vskip-0.2cm
\begin{equation}\label{eq:99}\hskip-5.15cm\Rightarrow \partial_{2}F(\gamma_{1}(t), \gamma_{2}(t)) / \partial_{1}F(\gamma_{1}(t), \gamma_{2}(t)) = - \gamma_{1}'(t) / \gamma_{2}'(t).\end{equation}
Returning to the curve $\mathcal{V}$, observe that since $\Sigma$ is defined by the equation $F(x_{1}, x_{2}) = 0$ we have, from the definition of the map $\Phi_{\Sigma}$ in Corollary 3.7, the following parametrization
\begin{equation}\label{eq:100}\mathcal{V}' = \left\{\left(\frac{F_{x_{1}}(x)}{x\cdot\nabla F(x) - F_{x_{2}}(x)}, \frac{\sqrt{|\nabla F(x)|^{2} - (x\cdot \nabla F(x))^{2}}}{|x\cdot\nabla F(x) - F_{x_{2}}(x)|}\right): x = (x_{1}, x_{2})\in\mathcal{V}\right\}\end{equation}
of $\mathcal{V}'$. From the parametrization of $\mathcal{V}$ we can replace, in the right hand side of equation (\ref{eq:100}), the variable $x$ with $\gamma(t) = (\gamma_{1}(t), \gamma_{2}(t))$. Thus, using equation (\ref{eq:99}) it follows that in a neighborhood of $t_{0}$ we have the following parametrization of $\mathcal{V}'$ in a neighborhood of the point $p$:
\vskip-0.2cm
$$\hskip-2cm\mathcal{V}' : \left(\frac{F_{x_{1}}(\gamma(t))}{\gamma(t)\cdot\nabla F(\gamma(t)) - F_{x_{2}}(\gamma(t))}, \frac{\sqrt{|\nabla F(\gamma(t))|^{2} - (\gamma(t)\cdot \nabla F(\gamma(t)))^{2}}}{|\gamma(t)\cdot\nabla F(\gamma(t)) - F_{x_{2}}(\gamma(t))|}\right)$$
\begin{equation} = \left(\frac{\gamma_{2}'(t)}{\gamma_{1}(t)\gamma_{2}'(t) - \gamma_{2}(t)\gamma_{1}'(t) + \gamma_{1}'(t)}, \frac{\sqrt{[\gamma_{1}'(t)]^{2} + [\gamma_{2}'(t)]^{2} - (\gamma_{1}(t)\gamma_{2}'(t) - \gamma_{2}(t)\gamma_{1}'(t))^{2}}}{|\gamma_{1}(t)\gamma_{2}'(t) - \gamma_{2}(t)\gamma_{1}'(t) + \gamma_{1}'(t)|}\right).\end{equation}
\vskip0.2cm
\hskip-0.625cm By taking the derivative, with respect to $t$, of this parametrization it follows that the tangent line of $\mathcal{V}'$ at the point $p = \Phi_{\Sigma}(\gamma(t_{0}))$ has the direction
\vskip-0.2cm
$$\hskip-14.2cm \nu(t_{0})$$
$$\hskip0.1cm = K(t_{0})\left(1 - \gamma_{2}(t_{0}), \pm\frac{\gamma_{1}(t_{0})(\gamma_{1}(t_{0})\gamma_{2}'(t_{0}) - \gamma_{1}'(t_{0})\gamma_{2}(t_{0})) + \gamma_{1}(t_{0})\gamma_{1}'(t_{0}) + \gamma_{2}(t_{0})\gamma_{2}'(t_{0}) - \gamma_{2}'(t_{0})}{\sqrt{[\gamma_{1}'(t_{0})]^{2} + [\gamma_{2}'(t_{0})]^{2} - (\gamma_{1}(t_{0})\gamma_{2}'(t_{0}) - \gamma_{2}(t_{0})\gamma_{1}'(t_{0}))^{2}}}\right),$$
where
\vskip-0.2cm
$$\hskip-6.5cm K(t) = \frac{\gamma_{1}'(t)\gamma_{2}''(t) - \gamma_{1}''(t)\gamma_{2}'(t)}{(\gamma_{1}(t)\gamma_{2}'(t) - \gamma_{2}(t)\gamma_{1}'(t) + \gamma_{1}'(t))^{2}}.$$
Now, this is the crucial part where we use the fact that $\mathcal{V}$ is regular which translates to the condition that $\gamma_{1}'(t)\gamma_{2}''(t) - \gamma_{1}''(t)\gamma_{2}'(t)\neq0$ and thus $K(t)\neq0$.

Observe also that $K(t)$ is finite, i.e, $\gamma_{1}(t)\gamma_{2}'(t) - \gamma_{2}(t)\gamma_{1}'(t) + \gamma_{1}'(t) \neq 0$ since the last inequality is exactly the condition that the tangent lines of the image of the path function $(-\gamma_{1}(t), \gamma_{2}(t))$ (i.e., the reflection of $\mathcal{V}$ with respect to the $x_{2}$ axis) do not pass through the north pole $e_{2}$. This can be easily seen to be equivalent to the condition that the tangent lines of $\mathcal{V}$, which is parameterized by $(\gamma_{1}(t), \gamma_{2}(t))$, do not pass through the north pole $e_{2}$ which is the case since no point in $\Sigma'$ is in $\mathcal{V}$.

Hence, since $1 - \gamma_{2}(t_{0}) > 0$ (since $\gamma(t)$ is contained inside the unit circle) it follows that $\nu(t_{0})$ is finite and satisfies $\nu(t_{0})\neq\bar{0}$ and thus $\mathcal{V}'$ has a tangent line at $p$. To show that $\mathcal{V}'$ is space like in $p$ let us define
\vskip-0.2cm
$$\hskip-2.1cm h_{1}(t) = (1 - \gamma_{2}(t))\sqrt{[\gamma_{1}'(t)]^{2} + [\gamma_{2}'(t)]^{2} - (\gamma_{1}(t)\gamma_{2}'(t) - \gamma_{2}(t)\gamma_{1}'(t))^{2}},$$
$$\hskip-1.1cm h_{2}(t) = \pm[\gamma_{1}(t)(\gamma_{1}(t)\gamma_{2}'(t) - \gamma_{1}'(t)\gamma_{2}(t)) + \gamma_{1}(t)\gamma_{1}'(t) + \gamma_{2}(t)\gamma_{2}'(t) - \gamma_{2}'(t)]$$
then it can be easily verified that
\vskip-0.2cm
$$\hskip-1.2cm h_{1}^{2}(t) - h_{2}^{2}(t) = (1 - \gamma_{1}^{2}(t) - \gamma_{2}^{2}(t))(\gamma_{1}(t)\gamma_{2}'(t) - \gamma_{1}'(t)\gamma_{2}(t) + \gamma_{1}'(t))^{2}\geq0.$$
Hence, since $p$ is arbitrary it follows that $\mathcal{V}'$ is a space like curve.

\end{proof}

Observe that, in the two dimensional case, Lemma 4.1 is slightly stronger than Theorem 3.6 since the condition that $\Sigma$ must be axially symmetric is relaxed.

In higher dimensions Theorem 3.6 follows from Lemma 4.1 as follows. Let $\Sigma_{0}$ be the two dimensional curve in $\Bbb R^{2}$ obtained by intersecting $\Sigma$ with the two dimensional plane $\mathcal{H}$ spanned by $e_{1}$ and $e_{n + 1}$ and let $\Sigma_{0}'$ be its set of singularity. If $\mathcal{V}$ is simply connected and regular then it follows that the intersection of $\mathcal{V}$ with $\mathcal{H}$ is a subcurve $\mathcal{V}_{0}$ of $\Sigma_{0}$ which must be symmetric with respect to the second coordinate axis (since $\mathcal{V}$ is axially symmetric), connected (since $\mathcal{V}$ is simply connected) and regular (since $\mathcal{V}$ is regular and is the surface of revolution of $\mathcal{V}_{0}$). That is, $\mathcal{V}_{0}$ is a connected component of $\Sigma_{0}\setminus\Sigma_{0}'$ which is regular and thus by Lemma 4.1 its image $\Phi_{\Sigma_{0}}(\mathcal{V}_{0})$ under $\Phi_{\Sigma_{0}}$ is a space like curve which is also symmetric with respect to the second coordinate axis (since $\mathcal{V}_{0}$ is symmetric). Hence, since the image $\Phi_{\Sigma}(\mathcal{V})$ of $\mathcal{V}$ under $\Phi_{\Sigma}$ is the surface of revolution of $\Phi_{\Sigma_{0}}(\mathcal{V}_{0})$ it follows that $\Phi_{\Sigma}(\mathcal{V})$ is also space like and is also smooth since $\Phi_{\Sigma_{0}}(\mathcal{V}_{0})$ is symmetric (i.e., when revolving $\Phi_{\Sigma_{0}}(\mathcal{V}_{0})$ with respect to the second coordinate axis the resulting surface does not intersect itself). This proves Theorem 3.6.\\

For the proof of Theorem 3.1 we will also need the following two auxiliary lemmas. The first lemma, proved in \cite[Chap VI]{3}, is a uniqueness theorem for functions which satisfy the Darboux's equation and vanish, with their first order derivatives, on a subdomain of the hyperplane $t = 0$. The method of its proof can be easily extended to any smooth, space like surface. We present and prove this modified extended result in Lemma 4.2 below for the reader convenience. Before proving Lemma 4.2 we make the convention that a point $(p, p_{n + 1})\in\Bbb R^{n}\times\Bbb R^{+}$ is "above" a surface $S\subset\Bbb R^{n}\times\Bbb R^{+}$ if the segment $[(p, 0), (p, p_{n + 1})]$ intersects $S$ and the segment $[(p, p_{n + 1}), (p, \infty)]$ does not.

\begin{lem}

For a continuous function $f$, defined on $\Bbb R^{n}$, let $\mathcal{R}f$ be its spherical mean transform and assume that $\mathcal{R}f$ vanishes, together with its first order derivatives, on a space like surface $S\subset\Bbb R^{n}\times\Bbb R^{+}$. Then, if $(p, p_{n + 1})\in\Bbb R^{n}\times\Bbb R^{+}$ is a point above $S$ then $\mathcal{R}f$ vanishes at this point.

\end{lem}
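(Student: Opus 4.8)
The plan is to exploit the fact that $u:=\mathcal{R}f$ satisfies Darboux's equation, a second order hyperbolic equation whose principal (second order) part is the wave operator $\partial_{t}^{2}-\Delta_{x}$, the remaining terms being lower order multiples of $t^{-1}\partial_{t}$ and $t^{-2}$. The characteristic surfaces of this operator are exactly the right circular cones of slope one, so the notion of a space like surface from Definition 2.6 (normal inside the cone, $\mathbf{N}_{n+1}^{2}\geq\sum_{i=1}^{n}\mathbf{N}_{i}^{2}$) coincides with the PDE-theoretic notion of a space like Cauchy surface for this operator. Thus $S$ is a legitimate initial surface, and the hypothesis that $\mathcal{R}f$ and its first order derivatives vanish on $S$ is precisely the vanishing of the Cauchy data on $S$. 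The statement to be proved is then the classical finite speed of propagation (domain of dependence) uniqueness theorem, and the argument of \cite[Chap VI]{3} for the flat surface $\{t=0\}$ extends once the geometry of the backward light cone relative to $S$ is understood.

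First I would set up the lens shaped region. Given a point $(p,p_{n+1})$ above $S$, let $\Gamma^{-}=\{(x,t):|x-p|=p_{n+1}-t,\ 0\leq t<p_{n+1}\}$ be the backward characteristic cone with apex at $(p,p_{n+1})$. Writing $S$ locally as a graph $t=\phi(x)$, the space like condition is equivalent to $|\nabla\phi|\leq 1$ (this is the graph form of Remark 2.7, that $S$ lies below every upward cone through its points). Comparing radial rates of descent, the function $\phi(x)-p_{n+1}+|x-p|$ is nondecreasing in $|x-p|$ and is negative at $x=p$ (because $(p,p_{n+1})$ is above $S$); hence it changes sign exactly once, so $S$ cuts $\Gamma^{-}$ in a compact set and the two surfaces bound a compact lens $\mathcal{D}$ whose bottom $S_{0}:=S\cap\overline{\Gamma^{-}}\subset S$ carries vanishing Cauchy data and whose lateral boundary lies on $\Gamma^{-}$. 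Since the relevant piece of $S$ stays in $\{t>0\}$, the region $\mathcal{D}$ is bounded away from the singular locus $t=0$ of Darboux's equation.

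Next I would run the energy identity. Multiplying the equation by $2\,\partial_{t}u$ produces the divergence form
$$\partial_{t}\big((\partial_{t}u)^{2}+|\nabla_{x}u|^{2}\big)-\nabla_{x}\cdot\big(2\,\partial_{t}u\,\nabla_{x}u\big)=2\,\partial_{t}u\,\big(\partial_{t}^{2}u-\Delta_{x}u\big),$$
where the right hand side is a bounded multiple of the quadratic energy density on $\mathcal{D}$ (using $t\geq\delta>0$ there). Integrating over $\mathcal{D}$ and applying the divergence theorem expresses the flux of the energy vector field through $\partial\mathcal{D}=S_{0}\cup(\partial\mathcal{D}\cap\Gamma^{-})$ in terms of this bulk term. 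On $S_{0}$ the flux vanishes because the Cauchy data vanish; on the characteristic cone the flux is a perfect square and hence sign definite. Combining these facts with a Gronwall estimate on the family of truncated cones $\Gamma^{-}\cap\{t\geq\tau\}$, which absorbs the lower order bulk term, forces the energy $\int_{\{t=\tau\}\cap\mathcal{D}}\big((\partial_{t}u)^{2}+|\nabla_{x}u|^{2}\big)$ to vanish for all $\tau$. Together with $u=0$ on $S_{0}$ this yields $u\equiv 0$ on $\mathcal{D}$, and in particular $\mathcal{R}f(p,p_{n+1})=u(p,p_{n+1})=0$, which is the assertion.

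The hard part will be the interaction of the singular lower order term $t^{-1}\partial_{t}u$ of Darboux's equation with the fact that the energy flux through a characteristic cone is only sign definite rather than coercive. One must keep $\mathcal{D}$ inside $\{t\geq\delta>0\}$ so the singular coefficient is bounded, track the boundary orientation so that the (dissipative) sign of the bulk contribution cooperates with the nonnegative characteristic flux, and use Gronwall on horizontal slices to upgrade the coercive control of $(\partial_{t}u)^{2}+|\nabla_{x}u|^{2}$ on space like slices into the vanishing of $u$ itself. The degeneracy of the flux on $\Gamma^{-}$ is exactly what obstructs a one line conclusion and is the point at which the space like hypothesis on $S$, guaranteeing coercivity of the flux on $S_{0}$, is essential.
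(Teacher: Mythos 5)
Your geometric setup is the same as the paper's: both proofs form the lens bounded below by $S$ and above by the backward characteristic cone with apex at $(p,p_{n+1})$, multiply the equation by twice the $t$-derivative to get a divergence identity, integrate over the lens, kill the flux through $S$ using the vanishing Cauchy data, and observe that the flux through the characteristic cone is a perfect square of definite sign. The essential difference is how the singular lower-order term of Darboux's equation is handled. The paper works with the \emph{normalized} mean $\mathcal{Q}(x,t)=t^{1-n}\mathcal{R}(x,t)$, for which the equation reads $\mathcal{Q}_{tt}+\frac{n-1}{t}\mathcal{Q}_{t}-\Delta_{x}\mathcal{Q}=0$ with no zeroth-order term; the multiplier identity then produces the bulk term $\frac{2(n-1)}{t}\mathcal{Q}_{t}^{2}$, which is itself nonnegative on $\Bbb R^{n}\times\Bbb R^{+}$. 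The whole identity becomes a sum of three nonnegative quantities equal to zero, so each vanishes pointwise: no Gronwall argument, no truncation by horizontal slices, and no need for the lens to avoid $t=0$. One then reads off that all first derivatives of $\mathcal{Q}$ vanish on the cone, so $\mathcal{Q}$ is constant there and equal to its (zero) value on $S$.

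The step in your version that does not survive scrutiny is the assertion that the lens $\mathcal{D}$ is bounded away from the singular locus $t=0$. Nothing in the hypotheses guarantees this: a space like surface $S\subset\Bbb R^{n}\times\Bbb R^{+}$ may meet $\{t=0\}$, and indeed $S=\{t=0\}$ itself is the prototypical space like surface treated in \cite[Chap VI]{3}. For such $S$ the backward lens from any point above it reaches $t=0$, the coefficients $t^{-1}$ and (if you insist on working with the unnormalized $\mathcal{R}f$, which satisfies a Darboux-type equation with an additional $t^{-2}$ zeroth-order term) $t^{-2}$ are unbounded on $\mathcal{D}$, and the Gronwall constant you need to absorb the bulk term blows up. The fix is precisely the paper's normalization: after passing to $\mathcal{Q}=t^{1-n}\mathcal{R}$ the troublesome term acquires a favorable sign and can be kept on the ``good'' side of the identity instead of being estimated. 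With that modification your argument closes; without it, it proves the lemma only under the additional (unstated, and not generally true) hypothesis that $S$ stays in $\{t\geq\delta\}$ for some $\delta>0$.
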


\begin{proof}

For abbreviation we denote by $\mathcal{R}$ the spherical mean transform $\mathcal{R}f$ of $f$. Let us define the normalized spherical mean of $f$ by $\mathcal{Q}(x, t) = t^{1 - n}\mathcal{R}(x, t)$. Observe that $\mathcal{R}$ vanishes, together with its first order derivatives, on $S$ if and only if the same is true for $\mathcal{Q}$ and that $\mathcal{R}$ vanishes at $(p, p_{n + 1})$ if and only if the same is true for $\mathcal{Q}$. Hence, our aim is to show that $\mathcal{Q}$ vanishes at $(p, p_{n + 1})$ given the assumption that it vanishes on $S$ with its first order derivatives.

The normalized spherical mean transform $\mathcal{Q}(x,t)$ is known to satisfy the Darboux's equation (see \cite{1, 3, 12})
\vskip-0.2cm
$$\hskip-4.5cm L\left[\mathcal{Q}\right](x, t) = \left(\partial_{t}^{2} + \frac{n - 1}{t}\partial_{t} - \Delta_{x}\right)\mathcal{Q}(x, t) = 0.$$
Hence, in particular we have the identity
\begin{equation}\hskip-1.1cm 0 = - 2\mathcal{Q}_{t}L[\mathcal{Q}] = 2\sum_{i = 1}^{n}(\mathcal{Q}_{t}\mathcal{Q}_{x_{i}})_{x_{i}} - \left[\sum_{i = 1}^{n}\mathcal{Q}_{x_{i}}^{2} + \mathcal{Q}_{t}^{2}\right]_{t} - \frac{2(n - 1)}{t}\mathcal{Q}_{t}^{2}.\end{equation}
Let $P$ be the right circular cone with the apex at $(p, p_{n + 1})$. Then, if $G$ is the bounded domain between $P$ and $S$, $P'$ is the domain on the boundary of $G$ which belongs to $P$ and $S'$ is the domain on the boundary of $G$ which belongs to $S$ then by integrating identity (4.5) on $G$ and using the divergence theorem we have
\vskip-0.2cm
$$\hskip-2cm 0 = \int_{G}\left[\frac{2(n - 1)}{t}\mathcal{Q}_{t}^{2} + \left[\sum_{i = 1}^{n}\mathcal{Q}_{x_{i}}^{2} + \mathcal{Q}_{t}^{2}\right]_{t} - 2\sum_{i = 1}^{n}(\mathcal{Q}_{t}\mathcal{Q}_{x_{i}})_{x_{i}}\right]dxdt$$
$$ = \int_{G}\left[\frac{2(n - 1)}{t}\mathcal{Q}_{t}^{2}\right]dxdt + \int_{P'}\left[\left[\sum_{i = 1}^{n}\mathcal{Q}_{x_{i}}^{2} + \mathcal{Q}_{t}^{2}\right]\hat{n}_{t} - 2\sum_{i = 1}^{n}(\mathcal{Q}_{t}\mathcal{Q}_{x_{i}})\hat{n}_{x_{i}}\right]dP'$$
\begin{equation}\hskip-4.5cm + \int_{S'}\left[\left[\sum_{i = 1}^{n}\mathcal{Q}_{x_{i}}^{2} + \mathcal{Q}_{t}^{2}\right]\hat{n}_{t} - 2\sum_{i = 1}^{n}(\mathcal{Q}_{t}\mathcal{Q}_{x_{i}})\hat{n}_{x_{i}}\right]dS'.\end{equation}
By our assumption, all the derivatives of $\mathcal{Q}$ vanish on $S$, and in particular on $S'$, and thus the third integral in the right hand side of equation (4.6) vanishes. On the second integral the normal $\hat{n} = (\hat{n}_{x_{1}},...,\hat{n}_{x_{n}}, \hat{n}_{t})$ to $P'$ satisfies $\hat{n}_{t}^{2} = \hat{n}_{x_{1}}^{2} + ... + \hat{n}_{x_{n}}^{2}$. Thus, on $P'$ we have the following identity
$$\hskip-1.5cm\left[\sum_{i = 1}^{n}\mathcal{Q}_{x_{i}}^{2} + \mathcal{Q}_{t}^{2}\right]\hat{n}_{t} - 2\sum_{i = 1}^{n}(\mathcal{Q}_{t}\mathcal{Q}_{x_{i}})\hat{n}_{x_{i}} = \frac{1}{\hat{n}_{t}}\sum_{i = 1}^{n}\left(\mathcal{Q}_{x_{i}}\hat{n}_{t} - \mathcal{Q}_{t}\hat{n}_{x_{i}}\right)^{2}.$$
Now, observe that since the surface $S$ is space like and the point $(p, p_{n + 1})$ is above $S$ then, for the domain $G$, the exterior normals at its boundary points which belong $P'$ are pointing upward. That is, $P'$ is the "upper" part of the boundary of $G$ and $S'$ is its "lower part".
Hence, $\hat{n}_{t}\geq0$ on $P'$ and thus the integrand in the integral on $P'$ in the right hand side of equation (4.6) is nonnegative. Observe also that the integrand on $G$ is also nonnegative since $\mathcal{Q}_{t}^{2}\geq0$ and $t\geq0$ (because $G\subset\Bbb R^{n}\times\Bbb R^{+})$. Thus, both integrands vanish on their domain of integration respectively, i.e.,
\vskip-0.2cm
\begin{equation}\hskip-5.9cm\frac{1}{\hat{n}_{t}}\sum_{i = 1}^{n}\left(\mathcal{Q}_{x_{i}}\hat{n}_{t} - \mathcal{Q}_{t}\hat{n}_{x_{i}}\right)^{2} = 0, (x,t)\in P',\end{equation}
\begin{equation}\hskip-8cm\frac{n - 1}{t}\mathcal{Q}_{t}^{2}(x, t) = 0, (x, t)\in G.\end{equation}
Since $P'\subset G$ it follows in particular, from equation (4.8), that if $n\geq 2$ then $\mathcal{Q}_{t}$ vanishes on $P'$ and thus from equation (4.7) we have $\mathcal{Q}_{x_{i}}(x, t) = 0, i = 1,..., n$ for every $(x, t)\in P'$ (the case where $n = 1$, where the Darboux's equation coincides with the wave equation, is even simpler, see \cite[Chap VI, p. 643]{3}). Hence, all the first order derivatives of $\mathcal{Q}$ vanish on $P'$ and thus $\mathcal{Q}$ is constant on $P'$. Indeed, if $\varphi:A\rightarrow P' (A\subset\Bbb R^{n})$ is a parametrization of $P'$ then for the function $U(x) = \mathcal{Q}(\varphi(x))$ we have, by the chain rule, that $U_{x_{1}} = ... = U_{x_{n}} = 0$. Thus, $U$ is constant on $A$ which is equivalent to the fact that $\mathcal{Q}$ is constant on $P'$. Finally, since $\mathcal{Q}$ is constant on $P'$ and vanishes on the intersection of $P'$ with the surface $S$ then it follows that $\mathcal{Q}$ vanishes on $P'$. Since $(p, p_{n + 1})\in P'$ it follows in particular that $\mathcal{Q}$ vanishes on $(p, p_{n + 1})$. This finishes the proof of Lemma 4.2.

\end{proof}

For the proof of the second lemma we introduce the following map
$$\hskip-4.1cm\Psi_{1}:\Bbb B(0,1)\setminus\Bbb S_{0}\rightarrow\Bbb R^{n}\times\Bbb R^{+}, \Psi_{1}(x) = \Psi\left(\frac{x}{|x|}, |x|\right).$$
The map $\Psi_{1}$ acts as follows: for a point $x$ in $\Bbb B(0,1)\setminus\Bbb S_{0}$ we consider the unique hyperplane whose closest point to the origin is $x$ (since $x\neq0$ this hyperplane is well defined). This hyperplane intersects $\Bbb S^{n}$ at a subsphere which is then projected into a subsphere in $\Bbb R^{n}$ by using the stereographic projection. Finally, we identify this projected sphere with the point in $\Bbb R^{n}\times\Bbb R^{+}$ which corresponds to its center and radius respectively.

Observe that for points $x = \rho\psi$ in the punctured sphere $\Bbb S_{0}\setminus\{\bar{0}\}$ we have that $\rho = \psi_{n + 1}$. That is, these are the closest distance points, to the origin, of hyperplanes that pass through the north pole $e_{n + 1}$. Hence, the values $\Psi(\psi, \rho)$ are not defined for these points and hence we have to omit them from the domain of definition of $\Psi_{1}$. The point $x = \bar{0}$ is also omitted since it does not determine a hyperplane in $\Bbb R^{n + 1}$ in a unique way. It can be easily proved that $\Psi_{1}$ maps the interior of $\Bbb S_{0}$ into the domain in $\Bbb R^{n}\times\Bbb R^{+}$ which is above the upper hyperboloid $\mathcal{H}:x_{n + 1}^{2} - x_{1}^{2} - ... - x_{n}^{2} = 1, x_{n + 1} > 0$, and the exterior of $\Bbb S_{0}$, in the unit ball $\Bbb B(0,1)$, into the domain in $\Bbb R^{n}\times\Bbb R^{+}$ which is below $\mathcal{H}$ (see Figure 5). Explicitly, $\Psi_{1}$ is given by
$$\hskip-4cm\Psi_{1}(x) = \left(\frac{x_{1}}{|x|^{2} - x_{n + 1}},...,\frac{x_{n}}{|x|^{2} - x_{n + 1}}, \frac{|x|\sqrt{1 - |x|^{2}}}{\left||x|^{2} - x_{n + 1}\right|}\right).$$

\begin{figure}

\caption{\small{The left image below consists of the two connected parts of the domain of definition of $\Psi_{1}$. The blue part is the interior of $\Bbb S_{0}$ and the light blue part is its complement in the unit disk. The right image below consists of the images of the interior of $\Bbb S_{0}$ and its complement respectively under $\Psi_{1}$ which are the domains above (blue part) and below (light blue part) the upper part of the hyperboloid $x_{n + 1}^{2} - x_{1}^{2} - ... - x_{n}^{2} = 1$.}}

\hskip-1cm\begin{tikzpicture}

\draw [fill, cyan] (0, 0) circle (3cm);

\draw [fill, blue] (0, 1.5) circle (1.5cm);

\draw [very thick, white] (0, 0) circle (4cm);

\draw [very thick, white] (0, 1.5) circle (1.5cm);

\draw[thick,->] (-3.5,0) -- (3.5,0);
\draw[thick,->] (0,-3.5) -- (0,3.5);

\end{tikzpicture}\hskip1cm\begin{tikzpicture}

\fill [blue, domain=-3:3, variable=\x]
  (-3, 0)
  -- plot ({\x}, {3.2})
  -- (3, 0);

  \fill [cyan, domain=-3:3, variable=\x]
  (-3, 0)
  -- plot ({\x}, {sqrt(1 + \x*\x)})
  -- (3, 0);

  \draw[very thick, white, domain=-3:3, smooth, variable=\t] plot ({\t},{sqrt(1 + \t * \t)});

\draw[thick,->] (-3.5,0) -- (3.5,0);
\draw[thick,->] (0,-3.5) -- (0,3.5);

\end{tikzpicture}

\end{figure}
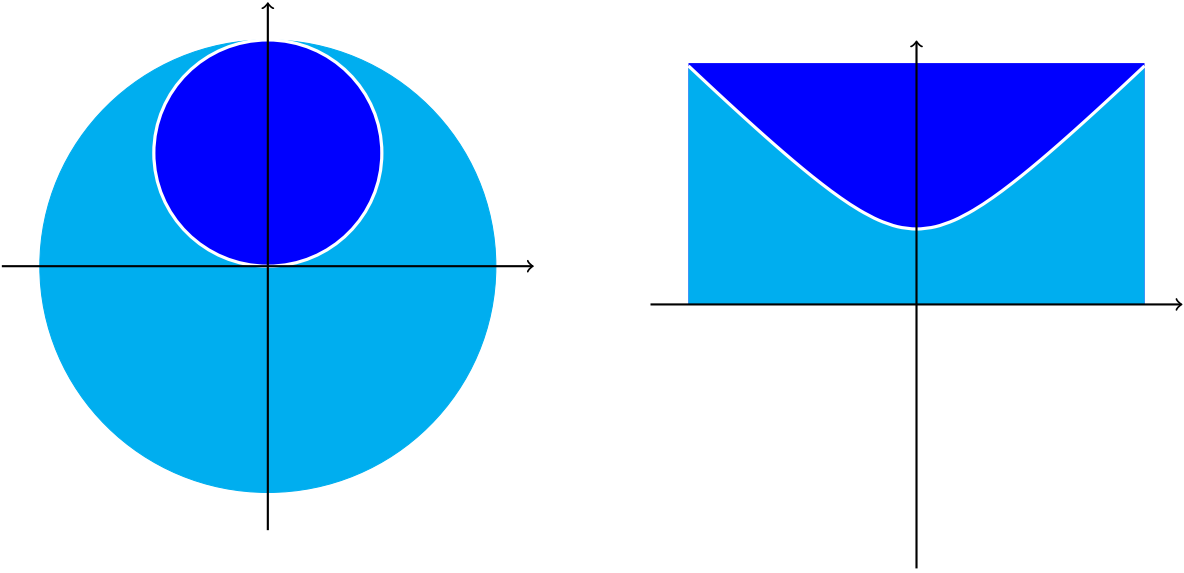

\begin{lem}

Let $\Sigma$ be in $\mathbf{\Sigma}$ and let $\mathcal{U}$ be the upper connected component of $\Sigma\setminus\Sigma'$ and assume that it is also regular. Let $f$ be a function in $C_{0}(\Bbb S^{n})$ and suppose that the spherical transform $\mathcal{S}f$ of $f$ vanishes on the family of subspheres $\Bbb S_{x}(\Sigma), x\in\mathcal{U}$, together with their infinitesimal perturbations. Then, if $\mathrm{H}$ is a tangent plane to $\mathcal{U}\setminus\Sigma_{0}$ then $f$ vanishes on the connected component of the set $\Bbb S^{n}\setminus\mathrm{H}$ which contains the north pole $e_{n + 1}$.

\end{lem}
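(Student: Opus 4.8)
The plan is to transport the whole problem to $\Bbb R^{n}$ through the stereographic projection, reduce it to a statement about the support of a function whose spherical mean transform vanishes in a region, and then settle that statement by combining Lemma 4.2 with the degeneration of spheres into hyperplanes at infinity.

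First I would pass from $f$ to the function $g$ of (2.3). Since $f\in C_{0}(\Bbb S^{n})$ vanishes near $e_{n+1}$, and $e_{n+1}$ corresponds to $\infty$ under $\Lambda$, the function $g$ is continuous and compactly supported on $\Bbb R^{n}$. Relation (2.2) gives the identity $\mathcal{S}_{0}f=\mathcal{R}g\circ\Psi_{1}$, and since $\Psi_{1}(\Psi_{0}(x))=\Psi(\psi_{x},\rho_{x})=\Phi_{\Sigma}(x)$ one has $\Phi_{\Sigma}=\Psi_{1}\circ\Psi_{0}$. Hence at a point $x\in\mathcal{U}\setminus\Sigma_{0}$ the hypothesis (2.4) reads $(\mathcal{R}g)(\Phi_{\Sigma}(x))=0$ and, after the chain rule together with the invertibility of the Jacobian of $\Psi_{1}$, also $\partial_{j}(\mathcal{R}g)(\Phi_{\Sigma}(x))=0$ for every $j$. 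Thus $\mathcal{R}g$ vanishes together with its first order derivatives on $S=\Phi_{\Sigma}(\mathcal{U})$, which by Theorem 3.6 is a space like surface. Lemma 4.2 then gives that $\mathcal{R}g$ vanishes at every point of $\Bbb R^{n}\times\Bbb R^{+}$ lying above $S$.

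Next I would convert this region vanishing into a support statement. Write $(y_{0},t_{0})=\Phi_{\Sigma}(x_{0})\in S$ for the point associated with the given tangent plane $\mathrm{H}=\mathrm{H}_{x_{0}}(\Sigma)$; by Lemma 2.1 the subsphere $\Bbb S_{x_{0}}(\Sigma)$ is carried by $\Lambda$ to $\Bbb S^{n-1}(y_{0},t_{0})$, and since $e_{n+1}$ maps to $\infty$, the component of $\Bbb S^{n}\setminus\mathrm{H}$ containing $e_{n+1}$ is carried to the exterior of $\overline{\Bbb B(y_{0},t_{0})}$. So it suffices to show $\mathrm{supp}(g)\subseteq\overline{\Bbb B(y_{0},t_{0})}$. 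Here I use that $S$ is space like: by Remark 2.8 the upward right circular cone with apex $(y_{0},t_{0})$ lies above $S$, so no point $(y,t)$ of $S$ satisfies $t>t_{0}+|y-y_{0}|$. Fix a direction $\theta\in\Bbb S^{n-1}$ and a level $p>y_{0}\cdot\theta+t_{0}$, and consider the spheres of center $-R\theta$ and radius $R+p$. The cone bound shows that the highest point of $S$ over the center $-R\theta$ has height at most $t_{0}+|R\theta+y_{0}|=R+y_{0}\cdot\theta+t_{0}+o(1)<R+p$ for all large $R$ (and the vertical line over $-R\theta$ does meet $S$, since by axial symmetry the centers sweep all of $\Bbb R^{n}$ as $x\to\partial\mathcal{U}$). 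Hence $(-R\theta,R+p)$ is above $S$ and $\mathcal{R}g(-R\theta,R+p)=0$. Letting $R\to\infty$, these spheres flatten to the hyperplane $\{x\cdot\theta=p\}$ and, $g$ being continuous and compactly supported, $\mathcal{R}g(-R\theta,R+p)\to\int_{x\cdot\theta=p}g\,dx$.

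Therefore $\int_{x\cdot\theta=p}g\,dx=0$ for every $p>y_{0}\cdot\theta+t_{0}$, and ranging over all $\theta\in\Bbb S^{n-1}$ (using both $\theta$ and $-\theta$) the hyperplane integral of $g$ vanishes over every hyperplane disjoint from $\overline{\Bbb B(y_{0},t_{0})}$, since $y_{0}\cdot\theta+t_{0}$ is exactly the support function of that ball. The classical support theorem for the Radon transform then forces $\mathrm{supp}(g)\subseteq\overline{\Bbb B(y_{0},t_{0})}$, i.e. $g\equiv0$ on the exterior of the ball; transporting back through $\Lambda^{-1}$ and recalling that the weight in (2.3) never vanishes, this is precisely the assertion that $f$ vanishes on the component of $\Bbb S^{n}\setminus\mathrm{H}$ containing $e_{n+1}$. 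I expect the main obstacle to be exactly this conversion from ``$\mathcal{R}g$ vanishes above $S$'' to a genuine support statement: one cannot shrink spheres to a point while staying above $S$, because every point of $S$ sits at a positive height, so the naive pointwise limit fails. The device that resolves it is the degeneration of the spherical mean transform into the Radon transform as the center recedes to infinity, and the space like bound $t\le t_{0}+|y-y_{0}|$ is precisely what keeps these far, large spheres above $S$ exactly when their limiting hyperplane misses the ball. The remaining technical points, namely the convergence of the surface integrals to the hyperplane integral and the invertibility of $D\Psi_{1}$, are routine.
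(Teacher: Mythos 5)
Your argument is correct, and its first half --- transporting the hypothesis through the stereographic projection via (2.2)--(2.3), using the chain rule and the nonvanishing Jacobian of $\Psi_{1}$ to conclude that $\mathcal{R}g$ vanishes with its first order derivatives on $\Phi_{\Sigma}(\mathcal{U})$, and then invoking Theorem 3.6 and Lemma 4.2 --- is exactly the paper's. (Strictly the hypothesis only gives the vanishing on $\Phi_{\Sigma}(\mathcal{U}\setminus\Sigma_{0})$; the paper spends a sentence extending it to all of $\Phi_{\Sigma}(\mathcal{U})$ by continuity, and you should too.) Where you genuinely diverge is in converting ``$\mathcal{R}g$ vanishes above $S$'' into a support statement. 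The paper observes that the points above $S$ include the whole interior of the upward cone with apex $(y_{0},t_{0})$, i.e.\ that the integrals of $g$ vanish over \emph{every} sphere enclosing $\Bbb S^{n-1}(y_{0},t_{0})$, and then quotes Helgason's lemma on such families of spheres \cite[Chap.~I, Lemma~2.7]{11} to get $g=0$ outside $\overline{\Bbb B(y_{0},t_{0})}$. You instead use only the thin subfamily of far-away large spheres $\Bbb S^{n-1}(-R\theta,R+p)$, verify via the space-like cone bound that they lie above $S$ (together with the surjectivity of the projection of $S$ onto $\Bbb R^{n}$, which does hold: by axial symmetry the centers sweep all directions, and their norms run continuously from $0$ at the axis point of $\mathcal{U}$ to $\infty$ at $\partial\mathcal{U}$), let $R\to\infty$ so the spherical means degenerate to Radon data over hyperplanes missing $\overline{\Bbb B(y_{0},t_{0})}$, and finish with the support theorem for the Radon transform. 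Both routes are sound. The paper's is shorter; yours makes the limiting mechanism explicit, needs only the large spheres rather than the full cone, and correctly diagnoses why a naive pointwise shrinking of spheres fails. Two caveats worth flagging: the limit interchange $\mathcal{R}g(-R\theta,R+p)\to\int_{x\cdot\theta=p}g$ genuinely requires the compact support of $g$ (which you have, since $f\in C_{0}(\Bbb S^{n})$), and in Helgason's book the Radon support theorem is itself deduced from the very sphere lemma the paper cites, so your route is not more elementary in substance, only in presentation.
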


\begin{proof}

The following relation $\Phi_{\Sigma}|_{\Sigma\setminus\Sigma_{0}} = \Psi_{1}\circ\Psi_{0}$, which follows immediately from the definitions of the maps $\Phi_{\Sigma}, \Psi_{0}$ and $\Psi_{1}$, will be used throughout the proof of the lemma. Using the relation (2.2) between the spherical and spherical mean transforms we have
$$(\mathcal{S}_{0}f)(x) = (\mathcal{S}f)(x / |x|, |x|) = (\mathcal{R}g)\left(\frac{x^{\ast}}{|x|^{2} - x_{n + 1}}, \frac{|x|\sqrt{1 - |x|^{2}}}{||x|^{2} - x_{n + 1}|}\right) = (\mathcal{R}g)(\Psi_{1}(x)),$$
where $x^{\ast} = (x_{1},...,x_{n})$ and the relation between $f$ and $g$ is given by equation (2.3). Hence, we obtain that
\begin{equation}\hskip-2cm(\mathcal{S}_{0}f)(x) = (\mathcal{R}g)(\Psi_{1}(x)), (\mathcal{S}_{0}f)_{x_{i}}(x) = [(\mathcal{R}g)(\Psi_{1}(x))]_{x_{i}}, i = 1, ..., n + 1.\end{equation}
The right hand side of equation (4.9) can be written equivalently as
\begin{equation}\hskip-6cm\nabla(\mathcal{S}_{0}(f))(x) = \mathbf{J}\Psi_{1}(x)^{\mathrm{T}}\cdot\nabla(\mathcal{R}g)(\Psi_{1}(x))\end{equation}
where in the right hand side we have a matrix multiplication of the Jacobian matrix $\mathbf{J}\Psi_{1}$ of $\Psi_{1}$, evaluated at the point $x$, and the gradient vector $\nabla(\mathcal{R}g)$ of $\mathcal{R}g$ evaluated at the point $\Psi_{1}(x)$.
Now, the Jacobian of $\Psi_{1}$ is given by
\vskip-0.2cm
$$\hskip-6.5cm |\det(\mathbf{J}\Psi_{1})(x)| = \frac{|x|}{||x|^{2} - x_{n + 1}|^{n + 1}\sqrt{1 - |x|^{2}}}$$
and so it does not vanish in the domain of definition of $\mathcal{S}_{0}f$. Hence, if all the derivatives of $\mathcal{S}_{0}f$ vanish at the point $x$ then from equation (4.10) it follows that all the derivatives of $\mathcal{R}g$ vanish at the point $\Psi_{1}(x)$. By assumption, the spherical transform $\mathcal{S}f$ vanishes on the family of subspheres $\Bbb S_{x}(\Sigma), x\in\mathcal{U}$, together with their infinitesimal perturbations. Hence, from Definition 2.8 we have
\vskip-0.2cm
\begin{equation}\hskip-2cm(\mathcal{S}_{0}f)(\Psi_{0}(x)) = 0, \partial_{i}(\mathcal{S}_{0}f)(\Psi_{0}(x)) = 0, i = 1,..., n + 1, \forall x\in\mathcal{U}\setminus\Sigma_{0}.\end{equation}
That is, all the derivatives of $\mathcal{S}_{0}f$ vanish at points $y = \Psi_{0}(x)$ where $x\in\mathcal{U}\setminus\Sigma_{0}$ and thus all the derivatives of $\mathcal{R}g$ vanish at points $z = \Psi_{1}(y) = \Psi_{1}(\Psi_{0}(x)) = \Phi_{\Sigma}(x)$ where $x\in\mathcal{U}\setminus\Sigma_{0}$. Also, by replacing $x$ with $\Psi_{0}(x)$ in equation (4.9) we have that
$$\hskip-5cm(\mathcal{S}_{0}f)(\Psi_{0}(x)) = (\mathcal{R}g)(\Psi_{1}(\Psi_{0}(x))) = (\mathcal{R}g)(\Phi_{\Sigma}(x)).$$
Thus, since, from equation (4.11), $(\mathcal{S}_{0}f)\circ\Psi_{0}$ vanishes on $\mathcal{U}\setminus\Sigma_{0}$ then the same is true for $(\mathcal{R}g)\circ\Phi_{\Sigma}$. That is, $\mathcal{R}g$ vanishes at each point $\Phi_{\Sigma}(x)$ where $x\in\mathcal{U}\setminus\Sigma_{0}$.\\

Hence, in conclusion we obtained that $\mathcal{R}g$ vanishes, with its first order derivatives, on the image of the set $\mathcal{U}\setminus\Sigma_{0}$ under the map $\Phi_{\Sigma}$. Observe that since the image of $\Sigma_{0}\cap\mathcal{U}$, under $\Phi_{\Sigma}$, is a set of codimension $1$ as a subset of the image $\mathcal{U}' := \Phi_{\Sigma}(\mathcal{U})$, and since $\mathcal{R}g$ is continuously differentiable, we can assume that $\mathcal{R}g$, and its first order derivatives, vanish on the whole image $\mathcal{U}'$.

Now, let $u\in\mathcal{U}$ be any point of tangency in the intersection of $\mathcal{U}$ with $\mathrm{H}$ and let $u' = (u'', u_{n + 1}')$ ($u''\in\Bbb R^{n}, u_{n + 1}'\in\Bbb R^{+}$) be its image in $\mathcal{U}'$ under the map $\Phi_{\Sigma}$. Since, by Theorem 3.6, $\mathcal{U}'$ is a space like surface (since $\mathcal{U}$ is regular and it is simply connected because it is the upper connected component of $\Sigma\setminus\Sigma'$) it follows that each point in the interior $P_{\mathrm{int}} = |t - u_{n + 1}'|^{2}\geq|x - u''|^{2}, t\geq u_{n + 1}'$, of the right circular cone with apex at $u'$ is above $\mathcal{U}'$. Hence, from Lemma 4.2 it follows that $\mathcal{R}g$ vanishes at each point in $P_{\mathrm{int}}$. Thus, if $\mathbf{S}$ denotes the hypersphere in $\Bbb R^{n}$ with the center at $u''$ and radius $u_{n + 1}'$ then the last condition is equivalent to the condition that the integrals of $g$ vanish on each hypersphere in $\Bbb R^{n}$ which contains $\mathbf{S}$. Since $f$ is continuous and vanishes in a neighborhood of $e_{n + 1}$ it follows that $g$ is continuous and is compactly supported and thus from \cite[Chap I, Lemma 2.7]{11} it follows that $g$ vanishes outside $\mathbf{S}$. From the definition of the stereographic projection $\Lambda$ and the map $\Phi_{\Sigma}$ it follows that the inverse image of $\mathbf{S}$ under $\Lambda$ is the subsphere $\mathbf{S}'$ on $\Bbb S^{n}$ obtained by intersecting $\Bbb S^{n}$ with the hyperplane $\mathrm{H}$. Thus, the inverse image of the exterior of $\mathbf{S}$ under $\Lambda$ must coincide with one of the connected components of $\Bbb S^{n}\setminus\mathbf{S}'$ and in fact it must coincide with the connected component $\mathbf{C}$ which contains the north pole $e_{n + 1}$ since it is the only component which is mapped to infinity (i.e., to an unbounded set) by $\Lambda$. Now, from the relation between the functions $f$ and $g$ it is clear that the support of $f$ coincides with the inverse image of the support of $g$ under $\Lambda$ and thus $f$ vanishes in $\mathbf{C}$. This proves Lemma 4.3.

\end{proof}

\textbf{Proof of Theorem 3.1:} For a hyperplane $\mathrm{H}$ in $\Bbb R^{n + 1}$, which intersects the unit sphere $\Bbb S^{n}$ and does not pass through the north pole $e_{n + 1}$, let us denote by $\mathrm{S}_{\mathrm{H}}^{+}$ and $\mathrm{S}_{\mathrm{H}}^{-}$ respectively the connected components of $\Bbb S^{n}\setminus\mathrm{H}$ where $\mathrm{S}_{\mathrm{H}}^{+}$ is the connected component which contains $e_{n + 1}$. Let us also denote by $\mathrm{H}^{+}$ and $\mathrm{H}^{-}$ respectively the halfspaces generated by $\mathrm{H}$ which correspond to $\mathrm{S}_{\mathrm{H}}^{+}$ and $\mathrm{S}_{\mathrm{H}}^{-}$. That is,
$$\hskip-8.5cm\mathrm{S}_{\mathrm{H}}^{+} = \Bbb S^{n}\cap\mathrm{H}^{+}, \mathrm{S}_{\mathrm{H}}^{-} = \Bbb S^{n}\cap\mathrm{H}^{-}.$$

Now let $\mathrm{H}_{0}$ be a tangent plane to $\mathcal{U}$ which passes through $e_{n + 1}$, i.e., $\mathrm{H}_{0}$ passes through the boundary of $\mathcal{U}$. Let us denote by $\mathrm{H}_{0}^{+}$ and $\mathrm{H}_{0}^{-}$ the half spaces generated by $\mathrm{H}_{0}$. This tangent plane is also tangent to the tangent cone $C_{\Sigma}$ of $\Sigma$ (since both sets contain the segment between $e_{n + 1}$ and a boundary point of $\mathcal{U}$). Hence, one of the half spaces $\mathrm{H}_{0}^{+}$ or $\mathrm{H}_{0}^{-}$ must contain $C_{\Sigma}$ and we can assume, without loss of generality, that this is the half space $\mathrm{H}_{0}^{-}$. Let us also denote $\mathrm{S}_{\mathrm{H}_{0}}^{+} = \Bbb S^{n}\cap\mathrm{H}_{0}^{+}$ and $\mathrm{S}_{\mathrm{H}_{0}}^{-} = \Bbb S^{n}\cap\mathrm{H}_{0}^{-}$.

Now, since $\mathrm{H}_{0}^{+}$ passes through the north pole $e_{n + 1}$ we cannot use Lemma 4.3 for this tangent plane. However, since $\mathrm{H}_{0}$ passes through the boundary of $\mathcal{U}$ it follows that there is a series of tangent planes $\{\mathrm{H}_{i}\}_{i}$ of $\mathcal{U}$ which converge to $\mathrm{H}_{0}$ and do not pass through $e_{n + 1}$. By taking a subsequence we can assume that none of these tangent planes pass through the origin (i.e, these are tangent planes to points on $\mathcal{U}\setminus\Sigma_{0}$). Indeed, otherwise it follows that $\mathrm{H}_{0}$ also passes through the origin and since it also passes through $e_{n + 1}$ and is tangent to $\mathcal{U}$ it follows, since $\mathcal{U}$ is axially symmetric, that $\mathcal{U}$ must intersect itself. However, by the assumption on smoothness on $\Sigma$ this is impossible.

Now, let us consider the subsequence of tangent planes $\{\mathrm{H}_{i}\}_{i}$ which intersect the segment $l$ between $-e_{n + 1}$ and $e_{n + 1}$ and the complement subsequence. Since at least one of these subsequences is infinite then by passing again to a subsequence we have the following two possibilities:\\

\hskip-0.6cm $\bullet$ Each tangent plane $\mathrm{H}_{i}$ intersects the segment $l$.\\

In this case, since $\{\mathrm{H}_{i}\}_{i}$ converge to $\mathrm{H}_{0}$ and $\mathrm{H}_{0}$ intersects the segment $l$ only at the point $e_{n + 1}$, it follows that each hyperplane $\mathrm{H}_{i}$ intersects $l$ at a point $a_{i}$ such that $a_{i}\rightarrow e_{n + 1}$. Hence, it
follows that each half space $\mathrm{H}_{i}^{+}$ of $\mathrm{H}_{i}$, which contains the north pole, contains only an infinitesimally small neighborhood of $e_{n + 1}$ in the segment $l$ (more specifically, this neighborhood is the segment $[a_{i}, e_{n + 1}]$). Since the interior of the cone $C_{\Sigma}$ contains the whole segment $l\setminus\{\pm e_{n + 1}\}$ and since $\{\mathrm{H}_{i}\}_{i}$ converge to a tangent plane of $C_{\Sigma}$ it follows that as $i\rightarrow\infty$ the intersection of $\mathrm{H}_{i}^{+}$ with the interior of $C_{\Sigma}$ converges to the empty set. Hence, the sequence of half spaces $\mathrm{H}_{i}^{+}$ "converge" to the half space $\mathrm{H}_{0}^{+}$ of $\mathrm{H}_{0}$ which does not contain $C_{\Sigma}$. Now, using Lemma 4.3 the function $f$ vanishes on each connected component $\mathrm{S}_{\mathrm{H}_{i}}^{+}$ of $\Bbb S^{n}\setminus\mathrm{H}_{i}$ obtained by intersecting $\Bbb S^{n}$ with the half space $\mathrm{H}_{i}^{+}$. Hence, since the family of half spaces $\{\mathrm{H}_{i}^{+}\}_{i}$ converge to the half space $\mathrm{H}_{0}^{+}$ and $\mathrm{S}_{\mathrm{H}_{i}}^{+} = \Bbb S^{n}\cap\mathrm{H}_{i}^{+}, \mathrm{S}_{\mathrm{H}_{0}}^{+} = \Bbb S^{n}\cap\mathrm{H}_{0}^{+}$ it follows that the family of sets $\{\mathrm{S}_{\mathrm{H}_{i}}^{+}\}_{i}$ converges to $\mathrm{S}_{\mathrm{H}_{0}}^{+}$ and thus $f$ vanishes on this set. Now, we can repeat the above argument for every rotation $\mathrm{H}_{0}'$ of $\mathrm{H}_{0}$, which preserves the last coordinate $x_{n + 1}$, and use the fact that since $\mathcal{U}$ is axially symmetric we will have the same behavior of the tangent planes which approach $\mathrm{H}_{0}'$ (i.e., each tangent plane intersects $l$). Hence, we can conclude that $f$ vanishes on each rotation of $\mathrm{S}_{\mathrm{H}_{0}}^{+}$, which preserves the last coordinate axis $x_{n + 1}$, and use the fact that the union of these sets, which is obtained by rotating $\mathrm{S}_{\mathrm{H}_{0}}^{+}$, covers the exterior of the cone $C_{\Sigma}$ in the sphere $\Bbb S^{n}$. That is, $f$ is supported on the projected set $\Pi_{\Sigma}$ of the cone $C_{\Sigma}$.

\begin{figure}

\caption{\small{The left image corresponds to the case where the series $\{\mathrm{H}_{i}\}_{i}$ converges to $\mathrm{H}_{0}$, each plane in this series is tangent to a point in $\mathcal{U}$ (the red part) and also intersects the segment $l$. The right image corresponds to the case where these planes do not intersect $l$.}}

\hskip-1cm\begin{tikzpicture}

\begin{axis}

\addplot [blue, samples = 50, domain = -3.2 / 2 : 1.1]
(
    {(0.5 + 0.2 * sin(deg(3 * x + 3.1))) * cos(deg(x))},
    {(0.5 + 0.2 * sin(deg(3 * x + 3.1))) * sin(deg(x)) - 0.2}
);

\addplot [blue, samples = 50, domain = -3.2 / 2 : 1.1]
(
    {- (0.5 + 0.2 * sin(deg(3 * x + 3.1))) * cos(deg(x))},
    {(0.5 + 0.2 * sin(deg(3 * x + 3.1))) * sin(deg(x)) - 0.2}
);

\addplot [red, samples = 50, domain = 1.1 : 2.06]
(
    {- (0.5 + 0.2 * sin(deg(3 * x + 3.1))) * cos(deg(x)) - 0.005},
    {(0.5 + 0.2 * sin(deg(3 * x + 3.1))) * sin(deg(x)) - 0.2}
);

\addplot [black, samples = 50, domain = -pi :  pi ]
(
    {cos(deg(x))}, {sin(deg(x))}
);

\addplot [black, samples = 50, domain = -0.7 : 0]({x + 0.04},{1 + 2.5 * x});

\addplot [black, samples = 50, domain = -1.29 : -0.123]({x + 0.4},{1.1 + 1.2 * x});

\addplot [black, samples = 50, domain = -1.95 : -0.25]({x + 1},{0.71 + 0.2 * x});

\addplot [black, dashed, samples = 50, domain = -1 : 1]({0},{x});

\draw [fill] (1000, 2000) circle (0.05cm);

\end{axis}

\begin{axis}

\addplot [blue, samples = 50, domain = -3.2 / 2 : 1.1]
(
    {(0.5 + 0.2 * sin(deg(3 * x + 3.1))) * cos(deg(x))},
    {(0.5 + 0.2 * sin(deg(3 * x + 3.1))) * sin(deg(x)) - 0.2}
);

\addplot [blue, samples = 50, domain = -3.2 / 2 : 1.1]
(
    {- (0.5 + 0.2 * sin(deg(3 * x + 3.1))) * cos(deg(x))},
    {(0.5 + 0.2 * sin(deg(3 * x + 3.1))) * sin(deg(x)) - 0.2}
);

\addplot [red, samples = 50, domain = 1.1 : 2.06]
(
    {- (0.5 + 0.2 * sin(deg(3 * x + 3.1))) * cos(deg(x)) - 0.005},
    {(0.5 + 0.2 * sin(deg(3 * x + 3.1))) * sin(deg(x)) - 0.2}
);

\addplot [black, samples = 50, domain = -pi :  pi ]
(
    {cos(deg(x))}, {sin(deg(x))}
);

\addplot [black, samples = 50, domain = -0.7 : 0]({x + 0.04},{1 + 2.5 * x});

\addplot [black, samples = 50, domain = -1.29 : -0.123]({x + 0.4},{1.1 + 1.2 * x});

\addplot [black, samples = 50, domain = -1.95 : -0.25]({x + 1},{0.71 + 0.2 * x});

\addplot [black, dashed, samples = 50, domain = -1 : 1]({0},{x});

\draw [fill] (1000, 2000) circle (0.05cm);

\end{axis}

\end{tikzpicture}\begin{tikzpicture}

\begin{axis}

\addplot [black, samples = 50, domain = -pi :  pi ]
(
    {cos(deg(x))}, {sin(deg(x))}
);

\addplot [red, samples = 50, domain =  - 5 * pi / 4 : pi / 4]
(
    {0.3 * cos(deg(x))}, {0.5 + 0.3 * sin(deg(x))}
);

\addplot [blue, samples = 50, domain =  -pi - pi / 3 - 0.145 : pi / 3 + 0.15]
(
    {0.6 * cos(deg(x))}, {0.15 + 0.6 * sin(deg(x))}
);

\addplot [black, samples = 50, domain = -0.65 : -0.002]({x - 0.13},{1 + 2.5 * x});

\addplot [black, samples = 50, domain = -0.95 : 0.95]({-0.3},{x});

\addplot [black, samples = 50, domain = -0.116 : 0.125]({x - 0.24},{-8 * x});

\addplot [black, dashed, samples = 50, domain = -1 : 1]({0},{x});

\draw [fill] (1000, 2000) circle (0.05cm);

\end{axis}

\begin{axis}

\addplot [black, samples = 50, domain = -pi :  pi ]
(
    {cos(deg(x))}, {sin(deg(x))}
);

\addplot [red, samples = 50, domain =  - 5 * pi / 4 : pi / 4]
(
    {0.3 * cos(deg(x))}, {0.5 + 0.3 * sin(deg(x))}
);

\addplot [blue, samples = 50, domain =  -pi - pi / 3 - 0.145 : pi / 3 + 0.15]
(
    {0.6 * cos(deg(x))}, {0.15 + 0.6 * sin(deg(x))}
);

\addplot [black, samples = 50, domain = -0.65 : -0.002]({x - 0.13},{1 + 2.5 * x});

\addplot [black, samples = 50, domain = -0.95 : 0.95]({-0.3},{x});

\addplot [black, samples = 50, domain = -0.116 : 0.125]({x - 0.24},{-8 * x});

\addplot [black, dashed, samples = 50, domain = -1 : 1]({0},{x});

\draw [fill] (1000, 2000) circle (0.05cm);

\end{axis}

\end{tikzpicture}

\end{figure}
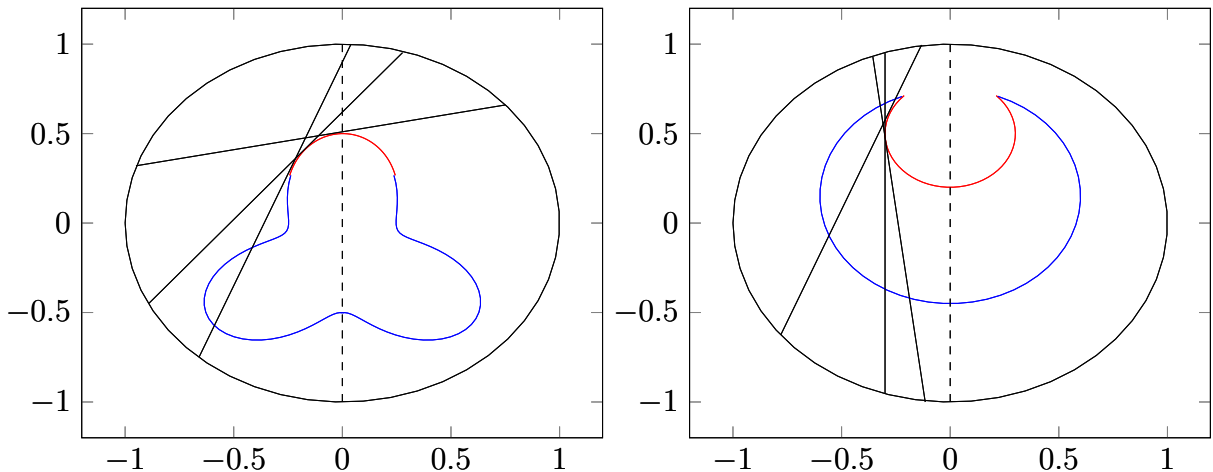

Hence, we are only left with the following possibility:\\

\hskip-0.6cm $\bullet$ Each tangent plane $\mathrm{H}_{i}$ does not intersect the segment $l$.\\

Using exactly the same argument as above we can prove that $f$ vanishes on $\mathrm{S}_{\mathrm{H}_{0}}^{-}$. Observe that $\mathrm{S}_{\mathrm{H}_{0}}^{-} = \Bbb S^{n}\cap\mathrm{H}_{0}^{-}$ where $\mathrm{H}_{0}^{-}$ is a half space which contains the cone $C_{\Sigma}$. Hence, if $\mathrm{H}_{\ast} = \mathbf{K}(\mathrm{H}_{0})$, where $\mathbf{K}$ is the map $(x^{\ast}, x_{n + 1})\mapsto (-x^{\ast}, x_{n + 1})$, then by simple geometrical considerations it can be easily proved that $\mathrm{H}_{0}^{-}$ contains the set $\mathrm{S}_{\mathrm{H}_{\ast}}^{+}$ which is obtained by intersecting $\Bbb S^{n}$ with the half space $\mathrm{H}_{\ast}^{+}$ which does not contain $C_{\Sigma}$. Thus $\mathrm{S}_{\mathrm{H}_{\ast}}^{+}\subset\mathrm{S}_{\mathrm{H}_{0}}^{-} $ and in particular $f$ vanishes on $\mathrm{S}_{\mathrm{H}_{\ast}}^{+}$. As in the previous case by rotating $\mathrm{S}_{\mathrm{H}_{\ast}}^{+}$ we can prove that $f$ is supported on $\Pi_{\Sigma}$. This finishes the proof of Theorem 3.1.

$\hskip14.25cm\square$\\

\end{document}